\documentclass[12pt, reqno]{amsart}
\usepackage{amssymb,amscd,amsthm, verbatim,amsmath,color,fancyhdr, mathrsfs}
\usepackage{graphicx}
\usepackage{turnstile,cite}
\usepackage[plainpages=false,pdfpagelabels]{hyperref}
\usepackage{hyperref}
\usepackage{orcidlink}
\usepackage{setspace}
\usepackage{enumitem}
\allowdisplaybreaks

\hypersetup{colorlinks,%
	citecolor=red,%
	filecolor=black,%
	linkcolor=blue,%
	urlcolor=blue
}
\usepackage[varg]{pxfonts}

\allowdisplaybreaks
\usepackage[letterpaper, left=2.5cm, right=2.5cm, top=2.5cm,
bottom=2.5cm,dvips]{geometry}

\newtheorem{theorem}{Theorem}[section]
\newtheorem{corollary}[theorem]{Corollary}
\newtheorem{lemma}[theorem]{Lemma}
\newtheorem{proposition}[theorem]{Proposition}
\theoremstyle{definition}
\newtheorem{definition}[theorem]{Definition}
\newtheorem{remark}[theorem]{Remark}
\newtheorem{example}[theorem]{Example}
\numberwithin{equation}{section}
\begin{document}
	\title{Fixed Point Theorems for Paired Contractive Mappings in $b$-Metric Spaces}
	
	\author{Hezekiah Seun Adewinbi ${^{1*}}$\orcidlink{0009-0003-0643-2146}}
	\address{${^1}$Department of Mathematical Sciences, Kent State University,
		Kent, OH, 44242, United States.}
	\email{hadewinb@kent.edu}
	
	\keywords{Paired-Kannan, Paired-Chatterjea, $b$-metric space, Paired contraction, fixed point \\
		{\rm 2020} {\it Mathematics Subject Classification}: 54H25, 47H10\\
		*Corresponding author: Hezekiah Seun Adewinbi (hadewinb@kent.edu)}

\begin{abstract}
In this paper, we develop a fixed point framework for paired
contractive mappings in complete $b$-metric spaces. We introduce the
class of paired-Chatterjea contractions and establish a
corresponding fixed point theorem. We further show that every
Chatterjea-type contraction is a paired-Chatterjea contraction,
whereas the converse implication does not hold, demonstrating that
the paired formulation properly extends the Chatterjea-type class.
Additionally, we establish a fixed point theorem for Paired contractions under the condition $\alpha\in[0,1)$, thereby extending the admissible range of the contraction constant from $[0,1/s)$ in Bouker et al.~(2026) to $[0,1)$. Furthermore, we examine the Paired–Kannan fixed point result of Bouker et al.~(2026) and provide an example showing that the stated coefficient condition does not guarantee the claimed fixed point conclusion in $b$-metric spaces. Motivated by this observation, we establish a corrected Paired–Kannan fixed point theorem under a complete and appropriately formulated set of hypotheses. Under suitable contractive conditions, we prove that the contractions under consideration have fixed points if and only if they have no periodic point of prime period $2$.  Moreover, we show that each of these contractions has at most two fixed points. Several examples are presented to illustrate our results and distinguish the proposed paired contractive mappings from existing ones.
\end{abstract}

\maketitle
\section{Introduction}
The celebrated Banach contraction principle \cite{Banach} is one of the cornerstones of nonlinear analysis and fixed point theory. It guarantees the existence and uniqueness of fixed points for contraction mappings defined on complete metric spaces and provides an effective iterative procedure for approximating such fixed points. Owing to its elegance and wide applicability, the Banach contraction principle has inspired numerous extensions and generalizations \cite{Patel, Ju, Eroglu, Fatai2026, 	Proinov2020, Popescu2021, Karapinar2017, Kikina, Jleli2026, Jleli2025}. 

In an effort to broaden the scope of the classical metric framework, Bakhtin\cite{Bakhtin} and  Czerwik \cite{Czerwik} introduced the notion of a $b$-metric space.  This generalization significantly enlarges the class of spaces in which fixed point techniques can be applied. Since its introduction, the theory of $b$-metric spaces has attracted considerable attention, leading to numerous extensions of classical fixed point results and the development of new contractive conditions tailored to this generalized setting \cite{Guan2024, Derouiche2020, Bota2020, Mitrovic2025, Chaib, Chaib2026, Aleksic2018, Huang2018, Shatnawi}.

In 2024, Chand and Rohen \cite{Chand2024} introduced the concept
of paired contractions in complete metric spaces, providing a new
framework for studying self-mappings satisfying a joint contractive condition involving three points. Their work opens new avenues for extending classical fixed point theory beyond traditional single-map contractions (see \cite{Chand2025, Chiroma2026,Chand2024b, Banerjee2026}).

Motivated by these developments, this paper investigates paired contractive mappings in $b$-metric spaces. First, we introduce Paired–Chatterjea contraction in $b$-metric spaces and establish corresponding fixed point results. Second, we establish fixed point results for Paired contraction in $b$-metric spaces by extending the admissible range of the contraction constant from $[0,1/s)$, as assumed in \cite[Theorem 2]{Bouker2026}, to the broader interval $[0,1)$. Third, we examine the coefficient restriction in the Paired–Kannan contraction and provide an explicit example showing that the condition stated in \cite[Theorem 3]{Bouker2026} is insufficient to guarantee the claimed fixed point conclusion in $b$-metric spaces. We subsequently establish the corresponding fixed point result under the revised condition
$\alpha<\min\left\{\frac12,\frac1s\right\}$. We include several examples to illustrate the results and distinguish the proposed paired contractive mappings from existing classes.

\section{Preliminaries}
To make the paper self-contained and facilitate the reader's understanding of what follows, we first recall several key definitions and foundational results. 
	\begin{definition} (See \cite{Bakhtin} and \cite{Czerwik})
	\label{b-metric space}
	Let $X$ be a nonempty set, and let $s\geq 1$ be a given real number. A function $d:X\times X\to\mathbb{R}_+$ is called a \textit{$b$-metric} if for all $x,y,z\in X$,
	\begin{enumerate}[label=(\alph*)]
		\item $d(x,y)=0$ if and only if $x=y$;
		\item $d(x,y)=d(y,x)$;
		\item $d(x,z)\leq s[d(x,y)+d(y,z)]$.
	\end{enumerate}
	A pair $(X,d)$ is called a $b$-metric space. 
\end{definition}

\begin{example}
	Let $(X, d_0)$ be a metric space, and let $n > 1$. Define  
	$d(x, y) = d_0(x, y)^n.$ Thus, $(X, d)$ is a $b$-metric space with constant $s = 2^{n- 1}$.
\end{example}

\begin{definition}(See \cite{Bakhtin} and \cite{Czerwik})
	Let $(X,d)$ be a $b$-metric space and $\{x_n\}$ be a sequence in $X$ and $x \in X$. Then
	\begin{enumerate}[label=(\alph*)]
		\item $\{x_n\}$ is called convergent to a point $x$ if and only if
		$\displaystyle	\lim_{n \to \infty} d(x_n, x) = 0.$
		\item $\{x_n\}$ is a Cauchy sequence if and only if
		$ \displaystyle d(x_n, x_m) \to 0,~\text{as } n, m \to \infty.$
		\item  $(X,d)$ is said to be complete if and only if every Cauchy sequence in $X$ is convergent.
	\end{enumerate}
\end{definition}

\begin{definition}
	Let $X$ be a nonempty set and let $T:X\to X$ be a self-mapping on $X$. A point $x^*\in X$ is called a \textit{fixed point} of $T$ if
	$Tx^*=x^*.$ The set of all fixed points of $T$ is denoted by
	$	\operatorname{Fix}(T)=\{x\in X:Tx=x\}.$
\end{definition}

\begin{theorem}(See \cite[Theorem 2.1]{Chand2024})
	Let $(X,d)$ be a complete metric space with a cardinality $|X| \geq 3$. A mapping $T : X \to X$ is said to have the $PC$ property on $X$ if there exists a real number $\alpha \in [0,1)$ such that
	\begin{equation}
			\label{Equation Paired-contraction}
		d(Tx, Ty) + d(Ty, Tz) \leq \alpha(d(x, y) + d(y, z))
	\end{equation}
	for all  pairwise distinct $x, y, z \in X$. If $T$ has the $PC$ property on $X$, then the following hold:
	\begin{enumerate}
		\item[(i)] $T$ has a fixed point if and only if $T$ does not have periodic points of prime period 2.
		\item[(ii)] The number of fixed points is at most two.
	\end{enumerate}
\end{theorem}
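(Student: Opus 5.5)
The plan is to run the Picard iteration and apply the $PC$ inequality \eqref{Equation Paired-contraction} to consecutive triples of iterates. This turns the sums $d(x_n,x_{n+1})+d(x_{n+1},x_{n+2})$ into a geometrically decaying sequence, which yields a Cauchy sequence. Completeness then gives a limit, and a short argument shows the limit is fixed.

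Fix $x_0\in X$ and set $x_{n+1}=Tx_n$. If some $x_n$ is a fixed point we are done, so suppose not. We first check that $x_n,x_{n+1},x_{n+2}$ are pairwise distinct for every $n$. Consecutive terms differ because no $x_n$ is fixed. If $x_{n+2}=x_n$, then $x_n$ is a periodic point of prime period $2$, which is excluded by hypothesis. This is exactly where the ``no period-$2$ points'' assumption in (i) enters.

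With distinctness in hand, put $D_n=d(x_n,x_{n+1})+d(x_{n+1},x_{n+2})$. Applying \eqref{Equation Paired-contraction} to the triple $(x_n,x_{n+1},x_{n+2})$ gives $D_{n+1}\le \alpha D_n$, hence $D_n\le \alpha^n D_0$. In particular $d(x_n,x_{n+1})\le \alpha^nD_0$, and the triangle inequality makes $\{x_n\}$ Cauchy. By completeness $x_n\to x^*$.

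To show $Tx^*=x^*$, suppose not. Since $x_n\ne x_{n+1}$ and the iterates are not eventually constant, for all large $n$ the three points $x_n, x_{n+1}, x^*$ are pairwise distinct. The only exception would be infinitely many $x_n$ equal to $x^*$; but then $x^*=Tx^*$ would follow from $x_{n+1}=Tx_n$ together with $x_{n+1}\to x^*$. Applying the inequality to the triple $(x^*, x_n, x_{n+1})$ gives
\[
d(Tx^*,x_{n+1})+d(x_{n+1},x_{n+2})\le \alpha\bigl(d(x^*,x_n)+d(x_n,x_{n+1})\bigr)\to 0,
\]
so $x_{n+1}\to Tx^*$. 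Uniqueness of limits then forces $Tx^*=x^*$.

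The converse direction of (i) is trivial. If $Tx=x$ and $y$ were a period-$2$ point with $y\ne Ty$, then three points among $x, y, Ty$ are distinct; the one possible coincidence, $x=y$, contradicts $y$ not being fixed. The inequality on the triple $(y,x,Ty)$ yields
\[
d(Ty,x)+d(x,y)\le\alpha\bigl(d(y,x)+d(x,Ty)\bigr),
\]
which is impossible with $\alpha<1$ since the left side is positive. In the same way, no fixed point coexists with a period-$2$ point.

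For (ii), suppose $x,y,z$ are three distinct fixed points. Then \eqref{Equation Paired-contraction} reads $d(x,y)+d(y,z)\le \alpha(d(x,y)+d(y,z))$, forcing $d(x,y)=0$, a contradiction.

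The main obstacle is the bookkeeping of pairwise distinctness. It appears in the iteration step, which uses the period-$2$ exclusion, and in the limit step. In the limit step I would handle the degenerate case $x_n=x^*$ for infinitely many $n$ separately, as indicated above.
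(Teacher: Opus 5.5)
Your argument is correct. Its skeleton matches the paper, which obtains this statement as the case $s=1$ of Theorem~\ref{Theorem Paired-contraction}: Picard orbit, consecutive triples distinct via the period-$2$ exclusion, $D_{n+1}\le\alpha D_n$, a triple containing $x^*$ to identify the limit, and three fixed points forcing $\alpha\ge1$.

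Two steps genuinely differ.

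\textbf{Cauchy property.} You telescope $d(x_n,x_{n+1})\le\alpha^nD_0$ with the triangle inequality, so you only ever need consecutive triples to be distinct. The paper instead first proves that the whole orbit is pairwise distinct, using strict decrease of $p_n$. It then runs Lemma~\ref{Cauchy-Paired-contraction}, applying \eqref{Equation Paired-contraction} to shifted triples such as $(x_{m+j-1},x_{n+j-1},x_{n+j})$ and handling $\alpha=0$ separately. That machinery exists for $b$-metrics, where the naive telescoping picks up powers of $s$ and needs $s\alpha<1$. At $s=1$ your route is simply shorter.

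\textbf{Converse.} You put the fixed point in the middle of the triple $(y,x,Ty)$. Then the left side of \eqref{Equation Paired-contraction} is exactly the bracket $d(x,y)+d(x,Ty)$ on the right, and a single application gives $\alpha\ge1$. The paper uses both $(x,y,z)$ and $(x,z,y)$ with $z=Ty$ and chains $A\le\alpha B\le\alpha^2A$. Yours is the cleaner trick.

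One small slip: $x=y$ is not the only coincidence among $x,y,Ty$ you must exclude. You also need $x\neq Ty$. This holds because otherwise $Ty$ would be fixed, while $T(Ty)=y\neq Ty$.
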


\begin{definition}(See \cite[Definition 4]{Bouker2026})
	\label{def4}
	Let $(X, d)$ be a $b$-metric space with a cardinality $|X| \geq 3$. The self-mapping $T : X \to X$ is referred to as a \textit{Paired Contraction} in $b$-metric space if there is a constant $\alpha \in [0, \frac{1}{s})$ so that the inequality \eqref{Equation Paired-contraction}
	holds for all pairwise distinct $x, y, z \in X$.
\end{definition}

\begin{theorem} (See \cite[Theorem 2]{Bouker2026})
	Let $(X, d)$ be a complete $b$-metric space with a cardinality $|X| \geq 3$, and consider a mapping $T : X \to X$ satisfy the following two conditions
	\begin{enumerate}
		\item[(a)] $T$ is a Paired Contraction mapping in $b$-metric space.
		\item[(b)] There are no periodic points of prime period 2 for $T$.
	\end{enumerate}
	Then $T$ has a fixed point. The number of fixed points is not more than two.
\end{theorem}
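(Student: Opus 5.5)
Starting from an arbitrary $x_0\in X$, I will run the Picard iteration $x_{n+1}=Tx_n$. If some $x_n$ is a fixed point we are done, so assume $x_{n}\neq x_{n+1}$ for all $n$. Hypothesis (b) then gives $x_{n+2}\neq x_n$, because $x_{n+2}=x_n$ with $x_{n+1}\ne x_n$ would make $x_n$ a periodic point of prime period $2$. Hence $x_n,x_{n+1},x_{n+2}$ are pairwise distinct, and \eqref{Equation Paired-contraction} applied to $(x,y,z)=(x_{n-1},x_n,x_{n+1})$ gives, with $D_n:=d(x_n,x_{n+1})+d(x_{n+1},x_{n+2})$,
\[
D_n\le \alpha D_{n-1}\le\cdots\le \alpha^n D_0 .
\]
In particular $d(x_n,x_{n+1})\le \alpha^n D_0$.

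The Cauchy property is the step where the hypothesis $\alpha<1/s$ is essential. The repeated $b$-triangle inequality gives, for $m>n$,
\[
d(x_n,x_m)\le \sum_{k=n}^{m-1} s^{k-n+1} d(x_k,x_{k+1})\le s D_0\sum_{k=n}^{m-1}(s\alpha)^{k-n}\alpha^n\le \frac{s\alpha^n D_0}{1-s\alpha},
\]
which tends to $0$ because $s\alpha<1$. So $\{x_n\}$ is Cauchy, and by completeness $x_n\to x^*$.

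Next I show $Tx^*=x^*$. We may assume infinitely many $x_n$ differ from $x^*$; if instead $x_n=x^*$ eventually, then $x_n=x_{n+1}$, which is excluded. In fact the $x_n$ are pairwise distinct, since $x_n=x_m$ with $m>n$ would make the sequence periodic, and a periodic sequence with positive consecutive distances contradicts $d(x_n,x_{n+1})\to0$. So for large $n$ the points $x^*,x_n,x_{n+1}$ are pairwise distinct. Applying \eqref{Equation Paired-contraction} to $(x_n,x^*,x_{n+1})$ gives
\[
d(x_{n+1},Tx^*)\le d(x_{n+1},Tx^*)+d(Tx^*,x_{n+2})\le \alpha\bigl(d(x_n,x^*)+d(x^*,x_{n+1})\bigr)\to 0 .
\]
Thus $x_{n+1}\to Tx^*$. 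Limits are unique in $b$-metric spaces, because $d(x^*,Tx^*)\le s[d(x^*,x_{n+1})+d(x_{n+1},Tx^*)]\to0$; hence $Tx^*=x^*$.

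Finally, suppose there were three distinct fixed points $u,v,w$. Then \eqref{Equation Paired-contraction} gives $d(u,v)+d(v,w)\le\alpha(d(u,v)+d(v,w))$, which forces $d(u,v)+d(v,w)=0$ since $\alpha<1$, contradicting distinctness. So there are at most two fixed points. The main obstacle is the Cauchy estimate: the powers of $s$ accumulate in the triangle inequality and must be absorbed by $\alpha<1/s$. A secondary point is arranging pairwise distinctness of the triples before each use of the paired inequality.
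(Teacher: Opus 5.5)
Your proof is correct. The paper does not reprove this cited result separately. It is subsumed by Theorem~\ref{Theorem Paired-contraction}, since $[0,1/s)\subset[0,1)$. That proof has the same skeleton as yours: the Picard orbit, $p_n\le\alpha p_{n-1}$ from the triple $(x_{n-1},x_n,x_{n+1})$, distinctness of the orbit, the paired inequality at $(x_n,x^*,x_{n+1})$, and the three-fixed-point contradiction.

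The genuine difference is the Cauchy step. You chain the $b$-triangle inequality along the orbit, which attaches weights $s^{k-n+1}$ to $d(x_k,x_{k+1})\le\alpha^kD_0$ and so needs $\sum_j(s\alpha)^j<\infty$. Your decay bound itself holds for every $\alpha<1$; the restriction $\alpha<1/s$ enters only in this summation.

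Lemma~\ref{Cauchy-Paired-contraction} avoids long chains. It applies the paired inequality to the shifted triples $(x_{m+j-1},x_{n+j-1},x_{n+j})$ to get $d(x_{m+n_0},x_{n+n_0})\le\alpha^{n_0}d(x_m,x_n)+n_0\alpha^{n+n_0}(\delta_0+\delta_1)$. It then uses the $b$-triangle inequality only twice, at a cost of a factor $s^2$, and controls the end terms $d(x_{n+n_0},x_n)$ and $d(x_{m+n_0},x_m)$ by the same paired recursion. Choosing $n_0$ with $s^2\alpha^{n_0}<1$ absorbs $s^2$, so the argument works for every $\alpha\in(0,1)$; the case $\alpha=0$ is handled separately as a constant map.

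Your route is shorter and fully sufficient under the stated hypothesis. The paper's route is more laborious but yields the stronger theorem. So your closing claim that $\alpha<1/s$ is ``essential'' describes your estimate, not the result.

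Finally, your distinctness argument differs from the paper's. You observe that a repeated orbit point makes the consecutive distances periodic and bounded away from $0$, contradicting $d(x_n,x_{n+1})\to0$; the paper instead uses strict decrease of $p_n$. Your version also makes the paper's two-case analysis of $\{n:x_n=x^*\}$ unnecessary. Both arguments are fine.
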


\begin{definition}(See \cite[Definition 5]{Bouker2026})
	\label{Withouts}
	Let $(X, d)$ be a $b$-metric space with a cardinality $|X| \geq 3$. The self-mapping $T : X \to X$ is referred to as a \textit{Paired-Kannan contraction mapping} in $b$-metric space if there is a constant $\alpha \in [0, \frac{1}{2})$, so that the inequality
	\begin{equation}
			\label{Equation paired-Kannan}
		d(Tx, Ty) + d(Ty, Tz) \leq \alpha \left(d(x, Tx) + d(y, Ty) + d(z, Tz)\right),
	\end{equation}
	holds for all pairwise distinct $x, y, z \in X$.
\end{definition}

\begin{theorem}(See \cite[Theorem 3]{Bouker2026})
	Let $(X, d)$ be a complete $b$-metric space with a cardinality $|X| \geq 3$, and consider a mapping $T : X \to X$ satisfy the following two conditions
	\begin{enumerate}
		\item[(a)] $T$ is a Paired-Kannan contraction mapping in $b$-metric space.
		\item[(b)] There are no periodic points of prime period 2 for $T$.
	\end{enumerate}
	Then $T$ has a fixed point. The number of fixed points is not more than two.
\end{theorem}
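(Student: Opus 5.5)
The plan is the usual Picard-iteration argument adapted to the three-point condition \eqref{Equation paired-Kannan}. Fix $x_0\in X$ and set $x_{n+1}=Tx_n$, $d_n=d(x_n,x_{n+1})$. If some $x_n$ is fixed we are done. Otherwise $x_n\neq x_{n+1}$ for all $n$. Also $x_{n+2}\neq x_n$, since otherwise $x_n$ would be a periodic point of prime period $2$, contradicting (b). So every triple $x_{n-1},x_n,x_{n+1}$ is pairwise distinct and \eqref{Equation paired-Kannan} applies to it:
\[
d_n+d_{n+1}\le \alpha\,(d_{n-1}+d_n+d_{n+1}).
\]
This gives $(1-\alpha)(d_n+d_{n+1})\le \alpha d_{n-1}$. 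Put $D_n=d_n+d_{n+1}$ and $\lambda=\frac{\alpha}{1-\alpha}$; then $\lambda<1$ because $\alpha<\tfrac12$. Since $d_{n-1}\le D_{n-1}$, we get $D_n\le \lambda D_{n-1}$, so $D_n\le \lambda^{n}D_0$ and $d_n\le \lambda^n D_0$ decays geometrically.

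Next I would show $\{x_n\}$ is Cauchy. In a $b$-metric space geometric decay of consecutive distances with \emph{any} ratio $\lambda<1$ suffices. This is the standard lemma (Miculescu--Mihail / Suzuki type): if $d(x_n,x_{n+1})\le C\lambda^n$ with $\lambda\in[0,1)$, then $\{x_n\}$ is Cauchy. Alternatively, one can pass to a block iterate $k$ with $s\lambda^k<1$ and telescope. By completeness, $x_n\to x^*$.

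To show $Tx^*=x^*$, I would use the $b$-triangle inequality
\[
d(x^*,Tx^*)\le s\,[d(x^*,x_{n+2})+d(x_{n+2},Tx^*)].
\]
I would bound the second term by applying \eqref{Equation paired-Kannan} to the distinct triple $(x_n,x_{n+1},x^*)$, which is legitimate for all large $n$ after discarding finitely many indices with $x_n=x^*$ (the $x_n$ are distinct if no fixed point occurs). This gives
\[
d(x_{n+2},Tx^*)\le \alpha\,(d_n+d_{n+1}+d(x^*,Tx^*)).
\]
Letting $n\to\infty$ yields $d(x^*,Tx^*)\le s\alpha\, d(x^*,Tx^*)$.

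This last step is the main obstacle. It closes only when $s\alpha<1$, and the hypothesis $\alpha<\tfrac12$ alone does not give that when $s>2$. So I expect the argument to go through only under the strengthened condition $\alpha<\min\{\tfrac12,\tfrac1s\}$. With merely $\alpha\in[0,\tfrac12)$ one should look for a counterexample with large $s$ rather than a proof.

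For the cardinality claim, suppose $x,y,z$ were three distinct fixed points. Then \eqref{Equation paired-Kannan} gives $d(x,y)+d(y,z)\le 0$, which is impossible. Hence $T$ has at most two fixed points.
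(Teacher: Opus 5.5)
Your diagnosis agrees with the paper. The quoted statement is not a theorem, and the paper does not prove it. Instead it refutes the existence part for $s>2$ (Example~\ref{counterexample}) and proves Theorem~\ref{Theorem paired-Kannan} under $\alpha<\min\{\frac12,\frac1s\}$.

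Up to the obstruction, your argument is essentially the paper's proof of that corrected theorem:
\begin{itemize}
\item the same rearrangement to $d_n\le\frac{\alpha}{1-\alpha}d_{n-1}$;
\item Lemma~\ref{lem:cauchy} for the Cauchy property;
\item distinctness of the orbit, which you only assert (it follows from $d_n<d_{n-1}$, so $x_i=x_j$ with $i<j$ would force $d_i=d_j$);
\item a limit step that needs exactly $1-\alpha s>0$.
\end{itemize}
The paper uses the triple $(x_n,x^*,x_{n+1})$ instead of your $(x_n,x_{n+1},x^*)$, which makes no difference. Your bound on the number of fixed points is correct for every $\alpha$.

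The gap is that you stop at the obstruction. Showing that one natural proof breaks at $d(x^*,Tx^*)\le s\alpha\,d(x^*,Tx^*)$ does not show the statement is false. That requires a concrete complete $b$-metric space and map in which $s\alpha\ge 1$ is actually exploited, and the paper's example supplies exactly this.

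The space is $X=\mathbb{N}_0\cup\{p,q\}$ with $d(m,n)=10^{-m}+10^{-n}$, $d(q,n)=10^{-n}$, $d(p,n)=\frac15$ and $d(p,q)=1$. It is complete with $s=5$, and $1=d(p,q)\le s\,(\frac15+10^{-n})$ for all $n$ forces $s\ge 5$. The map is $Tn=n+1$, $Tq=p$, $Tp=2$.

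The orbit $x_n=n$ converges to $q$. In your chain, $d(x_{n+2},Tq)=\frac15\le\frac25\,(d_n+d_{n+1}+1)$ and $d(q,Tq)=1\le 5\,(10^{-n-2}+\frac15)$ both hold, with $s\alpha=2$.

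Checking \eqref{Equation paired-Kannan} over all triples, the binding one is $(m,q,n)$, with ratio $\frac{2/5}{1+1.1(10^{-m}+10^{-n})}<\frac25$. So $T$ is a paired-Kannan contraction with $\alpha=\frac25<\frac12$, has no periodic point of prime period $2$, and has no fixed point.

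The mechanism to take away is this: $Tx^*$ sits at distance about $\alpha\,d(x^*,Tx^*)$ from the tail of the orbit, and the $b$-constant absorbs the remaining factor. An example like this is what turns your suspicion into a resolution of the statement.
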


\begin{theorem}(See \cite[Theorem 2.2]{Mitrovic})
	Let $(X, d)$ be a complete $b$-metric space and $T : X \to X$ be a mapping satisfying 
\begin{equation}
	\label{Kannan}
		d(Tx,Ty)\leq\alpha(d(x,Tx)+d(y,Ty))
\end{equation}
	for all $x,y \in X$.  Then $T$ has a unique fixed point if $\alpha< \min\left\{\frac{1}{2},\frac{1}{s}\right\}$. 
\end{theorem}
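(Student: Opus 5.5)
We prove the cited statement \cite[Theorem 2.2]{Mitrovic} by the standard Picard iteration. Fix $x_0\in X$ and set $x_{n+1}=Tx_n$. If $x_{n}=x_{n+1}$ for some $n$ we are done, so assume consecutive iterates are distinct.

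\textbf{Geometric decay.} Applying \eqref{Kannan} with $x=x_{n-1}$, $y=x_n$ gives
\[
d(x_n,x_{n+1})\le \alpha\bigl(d(x_{n-1},x_n)+d(x_n,x_{n+1})\bigr),
\]
hence $d(x_n,x_{n+1})\le \lambda\, d(x_{n-1},x_n)$ with $\lambda=\frac{\alpha}{1-\alpha}$. Since $\alpha<\frac12$, we have $\lambda<1$, and therefore $d(x_n,x_{n+1})\le \lambda^n d(x_0,x_1)\to 0$.

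\textbf{Cauchy property.} Iterating the $b$-triangle inequality for a geometric sequence forces the ratio condition $s\lambda<1$, and this need not hold. Instead we exploit the Kannan structure directly. For $m>n$, apply \eqref{Kannan} to the pair $x_{n-1},x_{m-1}$:
\[
d(x_n,x_m)=d(Tx_{n-1},Tx_{m-1})\le \alpha\bigl(d(x_{n-1},x_n)+d(x_{m-1},x_m)\bigr)\le \alpha\bigl(\lambda^{n-1}+\lambda^{m-1}\bigr)d(x_0,x_1)\to0 .
\]
So $\{x_n\}$ is Cauchy without any restriction involving $s$, and by completeness $x_n\to x^*$.

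\textbf{Fixed point and uniqueness.} This is the step where $\alpha<\frac1s$ is needed. We have
\[
d(x^*,Tx^*)\le s\bigl(d(x^*,x_{n+1})+d(Tx_n,Tx^*)\bigr)\le s\,d(x^*,x_{n+1})+s\alpha\bigl(d(x_n,x_{n+1})+d(x^*,Tx^*)\bigr).
\]
Letting $n\to\infty$ yields $(1-s\alpha)\,d(x^*,Tx^*)\le 0$, so $Tx^*=x^*$ because $s\alpha<1$. For uniqueness, if $Tx^*=x^*$ and $Ty^*=y^*$, then \eqref{Kannan} gives $d(x^*,y^*)\le\alpha(0+0)=0$, so $x^*=y^*$.

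The main subtlety is avoiding the tempting telescoping estimate in the Cauchy step, which would demand the stronger condition $\alpha/(1-\alpha)<1/s$. Using the Kannan inequality directly between arbitrary iterates circumvents this, so the only places the hypotheses enter are $\alpha<\frac12$ for the decay and $\alpha<\frac1s$ for the limit step.
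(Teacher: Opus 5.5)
Your proof is correct. The paper does not prove this statement; it quotes it from Mitrovi\'c. Its own Kannan-type argument, the proof of Theorem~\ref{Theorem paired-Kannan}, has the same skeleton as yours:
\begin{itemize}
\item the decay estimate $d(x_n,x_{n+1})\le\gamma\, d(x_{n-1},x_n)$ with $\gamma=\frac{\alpha}{1-\alpha}<1$;
\item a limit step in which $1-\alpha s>0$ is exactly what forces $d(x^*,Tx^*)=0$;
\item uniqueness read off directly from the contractive condition.
\end{itemize}

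The genuine difference is the Cauchy step. The paper goes from the ratio estimate to Cauchyness by invoking Lemma~\ref{lem:cauchy} (Miculescu--Mihail). You instead apply the Kannan inequality directly to the pair $(x_{n-1},x_{m-1})$. This gives the explicit bound $d(x_n,x_m)\le\alpha(\lambda^{n-1}+\lambda^{m-1})\,d(x_0,x_1)$ without using the $b$-triangle inequality at all. Your route is elementary, self-contained, and even yields an a priori convergence rate. It relies on being able to apply the contractive condition to arbitrary pairs of iterates. The paper's route needs only the consecutive-distance estimate, so it carries over unchanged to Chatterjea-type or paired conditions once such an estimate is available.

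One correction to your commentary. Only the naive telescoping estimate needs $s\lambda<1$. Lemma~\ref{lem:cauchy} shows that any sequence with $d(x_{n+1},x_n)\le\gamma\, d(x_n,x_{n-1})$ and $\gamma\in[0,1)$ is Cauchy in a $b$-metric space, for every $s\ge 1$. So your decay step alone already suffices. Your workaround is neat, but it is not forced, and the claim that a geometric-decay argument would ``demand the stronger condition $\alpha/(1-\alpha)<1/s$'' is inaccurate.
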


\begin{theorem}(See \cite[Theorem 3]{Kir2013})
	\label{Theorem1.1}
	Let $(X, d)$ be a complete $b$-metric space with constant $s \geq 1$. Suppose $T: X \to X$ is a mapping satisfying the \textit{Chatterjea-type contraction} condition
	\begin{equation}
		\label{Chatterjea}
		d(Tx, Ty) \leq \alpha \left[ d(x, Ty) + d(y, Tx) \right]
	\end{equation}
	for all $x, y \in X$, where $\alpha \in \left[0, \frac{1}{2s}\right)$. Then $T$ has a unique fixed point in $X$.
\end{theorem}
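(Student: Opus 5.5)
The statement to prove is Theorem \ref{Theorem1.1}. The plan is the Picard-iteration argument: a Chatterjea-type contraction in a $b$-metric space with $\alpha\in[0,\frac{1}{2s})$ has a unique fixed point.

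\textbf{Step 1: one-step decay.} Fix $x_0\in X$, set $x_{n+1}=Tx_n$ and $d_n=d(x_n,x_{n+1})$. If $d_n=0$ for some $n$, then $x_n$ is a fixed point, so assume $d_n>0$ for all $n$. Taking $x=x_{n-1}$ and $y=x_n$ in \eqref{Chatterjea}, and using $d(x_n,x_n)=0$, gives
\begin{equation*}
d_n\le \alpha\, d(x_{n-1},x_{n+1})\le \alpha s\,(d_{n-1}+d_n).
\end{equation*}
Hence $d_n\le k\,d_{n-1}$ with $k=\frac{\alpha s}{1-\alpha s}$. The hypothesis $\alpha<\frac{1}{2s}$ is exactly what gives $k<1$, so $d_n\le k^n d_0$.

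\textbf{Step 2: Cauchy property (the main obstacle).} In a $b$-metric space, iterating the triangle inequality for $m>n$ yields
\begin{equation*}
d(x_n,x_m)\le \sum_{j=n}^{m-1} s^{\,j-n+1} d_j.
\end{equation*}
This is bounded by $s\,d_0 k^n\sum_i (sk)^i$, which only converges when $sk<1$, and that need not hold. The fix I would try first is the standard lemma (Jovanović–Kadelburg–Radenović type): if $d(x_n,x_{n+1})\le k\,d(x_{n-1},x_n)$ with $k\in[0,1)$, then $\{x_n\}$ is Cauchy in any $b$-metric space. Its proof chooses $p$ with $s k^p<1$ and groups indices along blocks of length $p$. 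Concretely, by the triangle inequality at most $p$ times,
\begin{equation*}
d(x_n,x_{n+p})\le \Bigl(\textstyle\sum_{i=1}^{p} s^i k^{i-1}\Bigr) k^n d_0=C k^n.
\end{equation*}
One then handles the subsequences $x_{n+jp}$ geometrically with ratio $s k^p<1$. If this lemma is not considered "stated earlier", I would include this short blocking argument.

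\textbf{Step 3: the limit is fixed.} By completeness, $x_n\to u$. To show $Tu=u$, estimate
\begin{align*}
d(u,Tu)&\le s\,d(u,x_{n+1})+s\,d(Tx_n,Tu)\\
&\le s\,d(u,x_{n+1})+s\alpha\bigl[d(x_n,Tu)+d(u,x_{n+1})\bigr].
\end{align*}
Then use $d(x_n,Tu)\le s\,d(x_n,u)+s\,d(u,Tu)$. Taking $\limsup$ gives $d(u,Tu)\le s^2\alpha\, d(u,Tu)$. Since $s^2\alpha$ may exceed $1$, this direct route can fail. Instead apply \eqref{Chatterjea} to $x=u$, $y=Tu$ after first bounding the sequence terms differently: use $d(Tx_n,Tu)\le \alpha[d(x_n,Tu)+d(u,x_{n+1})]$ together with
\begin{equation*}
d(u,Tu)\le s\,d(u,x_{n+1})+s\,d(x_{n+1},Tu).
\end{equation*}
This gives
\begin{equation*}
\tfrac1s d(u,Tu)\le \limsup d(x_{n+1},Tu)\le \alpha\limsup d(x_n,Tu)\le \alpha s\, d(u,Tu).
\end{equation*}
So $d(u,Tu)\le \alpha s^2 d(u,Tu)$ again. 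To avoid the extra factor of $s$, work with $L=\limsup_n d(x_n,Tu)$ directly, which is finite. The relation $\limsup d(x_{n+1},Tu)\le\alpha\limsup d(x_n,Tu)$ gives $L\le \alpha L$, so $L=0$, i.e.\ $x_n\to Tu$. Since limits in a $b$-metric space are unique (by (a) and (c)), $Tu=u$.

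\textbf{Step 4: uniqueness.} If $u,v$ are both fixed, then \eqref{Chatterjea} gives $d(u,v)\le 2\alpha\, d(u,v)$. Since $2\alpha<1$, $u=v$.
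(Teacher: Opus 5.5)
Your proof is correct. The paper gives no proof of this statement; it is quoted from \cite{Kir2013} as background. The relevant comparison is therefore with the paper's proof of Theorem \ref{Theorem paired-Chatterjea}, whose skeleton you reproduce: the same ratio $\lambda=\frac{\alpha s}{1-\alpha s}<1$ obtained from $d(x_{n-1},x_{n+1})\le s(d_{n-1}+d_n)$. Your uniqueness step $d(u,v)\le 2\alpha\,d(u,v)$ is immediate. For the Cauchy step you need not reprove the blocking argument. You have a genuine one-step ratio $d_n\le k\,d_{n-1}$, so you can cite Lemma \ref{lem:cauchy} directly, exactly as the paper does in the proof of Theorem \ref{Theorem paired-Kannan}.

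The one genuine difference is Step 3. In the paired setting the paper expands $d(x_n,Tx^*)$ through $x_{n+1}$, i.e.\ $d(x_n,Tx^*)\le s\,d(x_n,x_{n+1})+s\,d(x_{n+1},Tx^*)$, and then absorbs the $d(x_{n+1},Tx^*)$ term. In your single-map setting the same device gives
\[
(1-\alpha s)\,d(x_{n+1},Tu)\le \alpha s\,d(x_n,x_{n+1})+\alpha\,d(u,x_{n+1}),
\]
which is an explicit rate and uses $\alpha s<1$. Your final version instead sets $L=\limsup_n d(x_n,Tu)$. This is finite because $d(x_n,Tu)\le s\,d(x_n,u)+s\,d(u,Tu)$, and shift-invariance of $\limsup$ then gives $L\le\alpha L$. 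That needs only $\alpha<1$ at this step but yields no rate. Both arguments are valid.

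Your two abandoned attempts failed only because you expanded $d(x_n,Tu)$ through $u$, which reintroduces $d(u,Tu)$ with an extra factor of $s$. In a write-up, delete those detours and keep one of the two working arguments, stating the finiteness of $L$ explicitly.
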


\begin{lemma}(See \cite[Lemma 2.2]{Miculescu})
	\label{lem:cauchy}
	Let $(X,d)$ be a $b$-metric space with constant $s \geq 1$. Suppose that $\{x_n\}$ is a sequence in $X$. If there exists $\gamma \in [0,1)$ satisfying
	$$
	d(x_{n+1}, x_n) \leq \gamma d(x_n, x_{n-1}),
	$$
	for every $n \in \mathbb{N}$, then $\{x_n\}$ is a Cauchy sequence.
\end{lemma}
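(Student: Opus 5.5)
We will prove Lemma \ref{lem:cauchy}, i.e.\ that $d(x_{n+1},x_n)\le\gamma d(x_n,x_{n-1})$ with $\gamma\in[0,1)$ forces $\{x_n\}$ to be Cauchy, even when $\gamma s\ge 1$. The naive approach sums $d(x_n,x_m)\le \sum_k s^{k} d(x_{n+k},x_{n+k+1})$ via the iterated $b$-triangle inequality. The factors $s^k$ grow geometrically, so this only converges when $\gamma s<1$. The remedy is to pass to a block subsequence along which the contraction factor beats the constant $s$.

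\textbf{Step 1.} By induction, $d(x_{n+1},x_n)\le \gamma^n d(x_1,x_0)=:\gamma^n c$ for all $n$. If $\gamma=0$ or $c=0$, the sequence is eventually constant and there is nothing to prove, so assume $\gamma>0$ and $c>0$.

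\textbf{Step 2 (block bound).} Fix $p\in\mathbb N$. For $m<n$, estimate $d(x_m,x_n)$ by splitting the chain $x_m,x_{m+1},\dots,x_n$ with the $b$-triangle inequality in a balanced (dyadic) way rather than sequentially. Splitting a chain of length $L$ into two halves recursively, each term picks up a factor of at most $s^{\lceil\log_2 L\rceil}\le sL^{\log_2 s}$. Hence
\[
d(x_m,x_n)\le s\,(n-m)^{\log_2 s}\sum_{k=m}^{n-1}\gamma^k c\le \frac{s\,c\,(n-m)^{\log_2 s}\gamma^m}{1-\gamma}.
\]
This polynomial-in-length loss is the key point, but it still does not immediately give Cauchy, since $n-m$ is unbounded. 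We handle that by choosing a block length.

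\textbf{Step 3.} Choose $p$ with $s\cdot s\,p^{\log_2 s}\gamma^{p}/(1-\gamma)<1$, possible since $\gamma^p$ decays faster than any polynomial. Set $y_j=x_{jp}$. Step 2 applied to consecutive blocks gives $d(y_j,y_{j+1})\le K\gamma^{jp}$ with $K=s c p^{\log_2 s}/(1-\gamma)$. Now $\gamma^p$ may be taken $<1/s$ (enlarging $p$ if needed). Then the sequential $b$-triangle summation on $\{y_j\}$ yields
\[
d(y_j,y_l)\le \sum_{i\ge0} s^{i+1}K\gamma^{(j+i)p}=\frac{sK\gamma^{jp}}{1-s\gamma^p}\to0,
\]
so $\{y_j\}$ is Cauchy.

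\textbf{Step 4.} For general $m<n$, write $m=jp+r$ and $n=lp+t$ with $0\le r,t<p$. Then
\[
d(x_m,x_n)\le s\,d(x_m,y_j)+s^2 d(y_j,y_l)+s^2 d(y_l,x_n).
\]
The two end terms are bounded by Step 2 over chains of length less than $p$, so they are $O(\gamma^{jp})$ and $O(\gamma^{lp})$ respectively. Hence $d(x_m,x_n)\to0$ as $m,n\to\infty$.

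The main obstacle is Step 2, specifically getting only a polynomial (not exponential) loss in the chain length. If the dyadic-splitting bookkeeping becomes messy, an alternative is available. Step 3 in fact only needs block lengths bounded by the fixed $p$, so the crude sequential estimate $d(x_m,x_{m+r})\le \sum_{i<r}s^{i+1}\gamma^{m+i}c\le C_p\gamma^m$ for $r\le p$ suffices, with $C_p=c\sum_{i<p}s^{i+1}$. One then just chooses $p$ so that $s\gamma^p<1$; the constant $C_p$ is harmless because it does not depend on $m$.
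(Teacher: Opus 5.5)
The paper gives no proof of this lemma. It is quoted from \cite[Lemma 2.2]{Miculescu} and used as a black box in the proof of Theorem \ref{Theorem paired-Kannan}. There $\gamma=\alpha/(1-\alpha)$ can exceed $1/s$, so the full range $\gamma\in[0,1)$ is genuinely needed. Since there is no in-text argument to compare with, I judged your proof on its own: it is correct and complete.

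Its engine is Steps 3--4. Passing to $y_j=x_{jp}$ replaces the ratio $\gamma$ by $\gamma^p$. Once $s\gamma^p<1$, the naive estimate $d(y_j,y_l)\le\sum_{i=0}^{l-j-1}s^{i+1}d(y_{j+i},y_{j+i+1})$ becomes summable. The interpolation $d(x_m,x_n)\le s\,d(x_m,y_j)+s^2d(y_j,y_l)+s^2d(y_l,x_n)$ then involves only chains of length less than $p$.

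As you note at the end, the dyadic Step 2 is superfluous for this route. The crude bound $d(x_m,x_{m+r})\le c\gamma^m\sum_{i<p}s^{i+1}$ for $r\le p$ already gives $d(y_j,y_{j+1})\le K\gamma^{jp}$ and the two end terms, with $K$ independent of $j$. Likewise, the first condition you impose on $p$ in Step 3 is never used; only $s\gamma^p<1$ matters.

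The dyadic estimate would earn its keep if you wanted to avoid the subsequence altogether. Chain sequentially through $x_m,x_{m+1},x_{m+2},x_{m+4},\ldots,x_{m+2^K},x_n$ and apply the dyadic bound inside each piece. This gives $d(x_m,x_n)\le C\gamma^m$ with $C$ independent of $n$, because the factor $s^{O(k)}$ picked up on the $k$-th piece is beaten by the $\gamma^{2^{k-1}}$ decay there.

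The block argument you chose is the more elementary of the two. It also shows clearly why $\gamma<1$ suffices, whereas the direct geometric summation would need $\gamma<1/s$.
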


\section{paired-Chatterjea Contractions}
In this section, we introduce the Paired–Chatterjea contraction in $b$-metric spaces. We show that every Chatterjea-type contraction is a Paired-Chatterjea contraction and provide an example demonstrating that the converse implication does not hold. Finally, we establish several fixed point theorems for Paired–Chatterjea contractions in $b$-metric spaces.

\begin{definition}
	\label{PairedChar}
	Let $(X, d)$ be a $b$-metric space with $|X| \geq 3$. A mapping $T: X \to X$ is called a \textit{paired-Chatterjea contraction} on $X$ if there exists a constant 
	$\alpha\in\left[0,\frac{1}{2s}\right)$
	such that 
	\begin{eqnarray}
		\label{Equation paired-Chatterjea}
		d(Tx, Ty) + d(Ty, Tz) \leq \alpha(d(x, Ty) + d(y, Tx) + d(y, Tz) + d(z, Ty))
	\end{eqnarray}
	holds for all three pairwise distinct $x,y,z\in X.$
\end{definition}

\begin{remark}
	Let $(X, d)$ be a $b$-metric space with $|X| \geq 3$, $T: X \to X$ be a Chatterjea-type contraction on $X$ and let $x, y, z \in X$ be pairwise distinct. Consider the inequality \eqref{Chatterjea} for the pair $y, z$:
	\begin{align}
		\label{Chatterjea1}
		d(Ty, Tz) &\leq \alpha (d(y, Tz) + d(z, Ty))
	\end{align}
	Adding the left and the right parts of the inequalities \eqref{Chatterjea} and \eqref{Chatterjea1}, we obtain \eqref{Equation paired-Chatterjea}. Hence, every Chatterjea-type contraction is a Paired-Chatterjea contraction.
\end{remark}

\begin{lemma}
	\label{Cauchy lemma}
	Let $(X,d)$ be a $b$-metric space with $|X|\geq 3$ and let
	$T:X\to X$ be a Paired-Chatterjea contraction. Suppose that $\{x_n\}$ is a
	sequence of pairwise distinct points in $X$ satisfying
	$x_{n+1}=Tx_n,~ n\in \mathbb{N}_0=\mathbb{N}\cup \{0\}$.
	Then $\{x_n\}$ is a Cauchy sequence.
\end{lemma}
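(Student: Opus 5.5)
We apply the paired-Chatterjea inequality \eqref{Equation paired-Chatterjea} to consecutive triples of the orbit and reduce to the geometric-decay criterion of Lemma~\ref{lem:cauchy}. Fix $n\in\mathbb{N}$ and take $x=x_{n-1}$, $y=x_n$, $z=x_{n+1}$. These points are pairwise distinct by hypothesis. Since $Tx=x_n$, $Ty=x_{n+1}$ and $Tz=x_{n+2}$, the inequality becomes
\begin{equation*}
d(x_n,x_{n+1})+d(x_{n+1},x_{n+2})\le \alpha\bigl(d(x_{n-1},x_{n+1})+d(x_n,x_n)+d(x_n,x_{n+2})+d(x_{n+1},x_{n+1})\bigr).
\end{equation*}
The two diagonal terms vanish, leaving $\alpha\bigl(d(x_{n-1},x_{n+1})+d(x_n,x_{n+2})\bigr)$ on the right.

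Next, each remaining term is bounded with the $b$-triangle inequality:
\begin{equation*}
d(x_{n-1},x_{n+1})\le s\bigl(d(x_{n-1},x_n)+d(x_n,x_{n+1})\bigr),\qquad d(x_n,x_{n+2})\le s\bigl(d(x_n,x_{n+1})+d(x_{n+1},x_{n+2})\bigr).
\end{equation*}
Write $a_k=d(x_k,x_{k+1})$. Substituting these bounds gives
\begin{equation*}
(1-\alpha s)\,a_{n+1}+(1-2\alpha s)\,a_n\le \alpha s\, a_{n-1}.
\end{equation*}
Because $\alpha<\frac{1}{2s}$, the coefficient $1-2\alpha s$ is positive. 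So I drop the $a_n$ term to get
\begin{equation*}
a_{n+1}\le \frac{\alpha s}{1-\alpha s}\,a_{n-1}.
\end{equation*}
Alternatively, I keep it, which yields $(1-2\alpha s)a_n\le \alpha s\,a_{n-1}$, i.e.
\begin{equation*}
a_n\le \gamma\, a_{n-1},\qquad \gamma=\frac{\alpha s}{1-2\alpha s}.
\end{equation*}
This second route is the one I intend to use.

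The main obstacle is that $\gamma<1$ only when $\alpha<\frac{1}{3s}$, not for the full range $\alpha<\frac{1}{2s}$. A cleaner approach is to extract a one-step bound from the combined inequality itself. The plan is as follows. From $(1-\alpha s)a_{n+1}+(1-2\alpha s)a_n\le \alpha s\,a_{n-1}$, set $b_n=a_n+a_{n+1}$ and compare $b_n$ with $b_{n-1}$. The inequality gives $(1-2\alpha s)\,b_n\le \alpha s\, b_{n-1}$, hence
\begin{equation*}
b_n\le \gamma\, b_{n-1},\qquad \gamma=\frac{\alpha s}{1-2\alpha s},
\end{equation*}
with the same $\gamma$ as before. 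So the bottleneck is unchanged.

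I would therefore try to sharpen the estimate. Instead of applying the triangle inequality crudely to $d(x_{n-1},x_{n+1})$, one could bound both long distances and use the left side $a_n+a_{n+1}$ jointly:
\begin{equation*}
a_n+a_{n+1}\le \alpha s\,(a_{n-1}+a_n)+\alpha s\,(a_n+a_{n+1}).
\end{equation*}
This gives $(1-\alpha s)(a_n+a_{n+1})\le \alpha s\,(a_{n-1}+a_n)$, i.e.
\begin{equation*}
b_n\le \lambda\, b_{n-1},\qquad \lambda=\frac{\alpha s}{1-\alpha s}.
\end{equation*}
Here $\lambda<1$ exactly when $\alpha<\frac{1}{2s}$, which is the hypothesis. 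Hence $b_n\le \lambda^n b_0$, and therefore $a_n\le b_n\le \lambda^n b_0$.

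A bound $a_n\le \lambda^n b_0$ does not literally have the form required by Lemma~\ref{lem:cauchy}. I would finish the Cauchy argument directly instead. For $m>n$, iterating the $b$-triangle inequality gives
\begin{equation*}
d(x_n,x_m)\le \sum_{k=n}^{m-1} s^{\,k-n+1}a_k\le b_0\sum_{k=n}^{m-1}s^{\,k-n+1}\lambda^k.
\end{equation*}
This needs $s\lambda<1$, which may fail. In that case I would instead apply Lemma~\ref{lem:cauchy} to the sequence of pairs, or use the known refinement (Miculescu--Mihail) that summable geometric decay $a_n\le C\lambda^n$ with $\lambda<1$ already forces the Cauchy property in $b$-metric spaces. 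Making this last step rigorous is the delicate point.
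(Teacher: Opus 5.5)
Everything up to $b_n\le\lambda b_{n-1}$ with $\lambda=\frac{\alpha s}{1-\alpha s}<1$ is correct and coincides with the paper's first step \eqref{same}. Your ``sharpened'' inequality is just a rearrangement of $(1-\alpha s)a_{n+1}+(1-2\alpha s)a_n\le\alpha s\,a_{n-1}$, so the $\gamma=\frac{\alpha s}{1-2\alpha s}$ detour can be deleted.

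The gap is the final step, which you leave open. From $a_k\le\lambda^k b_0$, the chained estimate $d(x_n,x_m)\le\sum_k s^{k-n+1}a_k$ only gives a uniform bound when $s\lambda<1$. Neither fallback is carried out:
\begin{itemize}
\item Lemma~\ref{lem:cauchy} needs a one-step ratio $a_{n+1}\le\gamma a_n$ along a sequence of points of $X$. The numbers $b_n=a_n+a_{n+1}$ are not the consecutive distances of any such sequence, so ``applying it to the sequence of pairs'' has no content.
\item It is true that $a_k\le C\lambda^k$ alone forces the Cauchy property in a $b$-metric space. But that is not Lemma~\ref{lem:cauchy}, and you give no proof of it.
\end{itemize}
So, as written, the proposal does not establish the lemma.

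The missing idea, which is how the paper concludes, is to apply the paired-Chatterjea inequality a second time to \emph{non-consecutive} orbit points. For $n\ge1$ and $m\ge n+2$, the points $x_{m-1},x_{n-1},x_n$ are pairwise distinct, and $(x,y,z)=(x_{m-1},x_{n-1},x_n)$ gives
\[
d(x_m,x_n)+a_n\le\alpha\bigl(d(x_{m-1},x_n)+d(x_{n-1},x_m)+d(x_{n-1},x_{n+1})\bigr).
\]
Each cross term contains $d(x_m,x_n)$ after one use of the $b$-triangle inequality. Hence
\[
(1-2s\alpha)\,d(x_m,x_n)\le s\alpha\,(a_{m-1}+2a_{n-1}+a_n)\le 4s\alpha\, b_0\lambda^{n-1},
\]
uniformly in $m$. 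Together with $d(x_{n+1},x_n)=a_n\to0$, this gives the Cauchy property. No chain of triangle inequalities is formed, so no growing power of $s$ appears. The hypothesis $\alpha<\frac1{2s}$ is used exactly to absorb the term $2s\alpha\, d(x_m,x_n)$. The paper phrases this as a contradiction argument along subsequences with $m_k\ge n_k+2$, which only needs $a_k\to0$.

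If you would rather keep your summation route, you must prove the geometric-decay criterion. One way is to cut $[n,m)$ into consecutive blocks of lengths $1,2,4,\dots$. A block of length at most $2^j$ costs $s^{j}$ by dyadic halving and is reached with weight at most $s^{j+1}$. This yields $d(x_n,x_m)\le\frac{b_0\lambda^n}{1-\lambda}\sum_{j\ge0}s^{2j+1}\lambda^{2^j-1}$, which is a convergent series.
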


\begin{proof}
	Since $x_{n-1},x_n,x_{n+1}$ are pairwise distinct, setting $x=x_{n-1}$, $y=x_n$, and $z=x_{n+1}$ in \eqref{Equation paired-Chatterjea} and applying the $b$-metric inequality, we obtain
	\begin{align*}
		d(x_n,x_{n+1})+d(x_{n+1},x_{n+2})&\leq\alpha (d(x_{n-1},x_{n+1})+d(x_n,x_{n+2}))\\
		&\leq s\alpha (d(x_{n-1},x_{n})+d(x_n,x_{n+1})+d(x_n,x_{n+1})+d(x_{n+1},x_{n+2})).
	\end{align*}
	Thus, 
	\begin{align}
		d(x_n,x_{n+1})+d(x_{n+1},x_{n+2})&\leq\frac{\alpha s}{1-\alpha s}(d(x_{n-1},x_n)+d(x_n,x_{n+1}))\nonumber\\
		\label{same}
		d(x_n,x_{n+1})+d(x_{n+1},x_{n+2})&\leq\lambda(d(x_{n-1},x_n)+d(x_n,x_{n+1})),
	\end{align}
	where $\lambda=\frac{\alpha s}{1-\alpha s}<1$ since $\alpha<\frac{1}{2s}$.  	Let 
	$	\delta_n=d(x_n,x_{n+1}),~ n\in \mathbb{N}_0.	$
	Thus, the inequality \eqref{same} gives
	$$
	\delta_n+\delta_{n+1}
	\leq
	\lambda(\delta_{n-1}+\delta_n),
	~n\geq1.
	$$
	It follows inductively that
	$
	\delta_n+\delta_{n+1}
	\leq
	\lambda^n(\delta_0+\delta_1),
	~n\in \mathbb{N}_0.
	$
	Consequently,
	\begin{eqnarray}
		\lim_{n\to\infty}\delta_n=0.
		\label{C1}
	\end{eqnarray}
	
	We now prove that $\{x_n\}$ is Cauchy. Suppose, to the contrary,
	that it is not Cauchy. Then there exist $\varepsilon>0$ and sequences
	of integers $\{m_k\}$ and $\{n_k\}$ such that
	$m_k>n_k,~n_k\to\infty,$
	and
	\begin{eqnarray}
		d(x_{m_k},x_{n_k})\geq\varepsilon
		\label{C2}
	\end{eqnarray}
	for every $k$. In view of \eqref{C1}, we may assume, after passing to
	a subsequence if necessary, that $m_k\geq n_k+2.$
	Since the orbit points are pairwise distinct, the three points
	$x_{m_k-1},~ x_{n_k-1},~ x_{n_k}$
	are pairwise distinct. Hence, by \eqref{Equation paired-Chatterjea}, we have
	\begin{align}
		d(x_{m_k},x_{n_k})+d(x_{n_k},x_{n_k+1})&\leq
		\alpha\left(
		d(x_{m_k-1},x_{n_k})
		+d(x_{n_k-1},x_{m_k})
		+d(x_{n_k-1},x_{n_k+1})
		+d(x_{n_k},x_{n_k})
		\right)\nonumber\\
		\label{C4}
		d(x_{m_k},x_{n_k})+\delta_{n_k}
		&\leq\alpha\left(
		d(x_{m_k-1},x_{n_k})
		+d(x_{n_k-1},x_{m_k})
		+d(x_{n_k-1},x_{n_k+1})
		\right).
	\end{align}
	By the $b$-metric inequality,
	$$
	d(x_{m_k-1},x_{n_k})
	\leq
	s\left(
	d(x_{m_k-1},x_{m_k})
	+d(x_{m_k},x_{n_k})
	\right)
	\text{ and }
	d(x_{n_k-1},x_{m_k})
	\leq
	s\left(
	d(x_{n_k-1},x_{n_k})
	+d(x_{n_k},x_{m_k})
	\right).
	$$
	Moreover,
	$$
	d(x_{n_k-1},x_{n_k+1})
	\leq
	s\left(
	d(x_{n_k-1},x_{n_k})
	+d(x_{n_k},x_{n_k+1})
	\right).
	$$
	Therefore, \eqref{C4} becomes
	\begin{align*}
		d(x_{m_k},x_{n_k})+\delta_{n_k}
		\leq
		\alpha s\Bigl[
		2d(x_{m_k},x_{n_k})
		+\delta_{m_k-1}
		+2\delta_{n_k-1}
		+\delta_{n_k}
		\Bigr].
	\end{align*}
	Consequently,
	\begin{align}
		\label{C5}
		(1-2s\alpha)d(x_{m_k},x_{n_k})
		\leq
		s\alpha\left(
		\delta_{m_k-1}
		+2\delta_{n_k-1}
		+\delta_{n_k}
		\right).
	\end{align}
	Since $m_k,n_k\to\infty$, \eqref{C1} implies
	$
	\delta_{m_k-1}\to0,~
	\delta_{n_k-1}\to0,~
	\delta_{n_k}\to0.
	$
	Because
	$
	1-2s\alpha>0,
	$
	it follows from \eqref{C5} that
	$$
	\lim_{k\to\infty}d(x_{m_k},x_{n_k})=0.
	$$
	This contradicts \eqref{C2}. Hence $\{x_n\}$ is a Cauchy
	sequence.
\end{proof}

\begin{theorem}
	\label{Theorem paired-Chatterjea}
	Let $(X,d)$ be a complete $b$-metric space with $|X| \geq 3$ and let $T: X \to X$ be a Paired-Chatterjea contraction. Then $T$ has a fixed point in $X$ if and only if $T$ has no periodic points of prime period 2. Moreover, the number of fixed points of $T$ is at most two. 
\end{theorem}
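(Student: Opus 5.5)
The argument splits into the two directions of the equivalence and then the count. The forward direction is immediate: a fixed point is not a point of prime period $2$. So assume $T$ has a fixed point $x^*$ and, for contradiction, a point $u$ with $T^2u=u$ and $Tu\neq u$. Set $v=Tu$. If $x^*\in\{u,v\}$, then $u$ or $v$ is fixed, and in either case $Tu=u$, which is impossible. So $x^*,u,v$ are pairwise distinct. Apply \eqref{Equation paired-Chatterjea} with $x=u$, $y=x^*$, $z=v$. The left side is $d(v,x^*)+d(x^*,u)$. The right side is $\alpha\bigl(d(u,x^*)+d(x^*,v)+d(x^*,u)+d(v,x^*)\bigr)=2\alpha\bigl(d(u,x^*)+d(v,x^*)\bigr)$. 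Since $2\alpha<1/s\le1$ and the left side is positive, this is a contradiction.

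For the converse, assume $T$ has no periodic points of prime period $2$. Take $x_0\in X$ and set $x_{n+1}=Tx_n$. If $x_n=x_{n+1}$ for some $n$, we have a fixed point and are done. Otherwise consecutive terms are distinct, and since there is no prime period $2$, also $x_n\neq x_{n+2}$. The key step is to show that the orbit is pairwise distinct or else contains a fixed point. Suppose $x_n=x_m$ with $m>n+2$ chosen with minimal gap. Then the orbit from $x_n$ is a cycle of length $p=m-n\geq3$ with all points distinct. Put $\delta_k=d(x_k,x_{k+1})$ along the cycle. The estimate \eqref{same} in the proof of Lemma~\ref{Cauchy lemma} only uses that $x_{k-1},x_k,x_{k+1}$ are pairwise distinct, which holds here. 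It gives $\delta_k+\delta_{k+1}\le\lambda(\delta_{k-1}+\delta_k)$ with $\lambda<1$. Iterating once around the cycle gives $\delta_n+\delta_{n+1}\le\lambda^p(\delta_n+\delta_{n+1})$, so $\delta_n=0$, a contradiction. Hence, if no fixed point appears, the orbit is pairwise distinct, and Lemma~\ref{Cauchy lemma} shows it is Cauchy. By completeness $x_n\to x^*$.

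The main obstacle is proving $Tx^*=x^*$, because a $b$-metric need not be continuous. Since the $x_n$ are distinct, at most one of them equals $x^*$ and at most one equals $Tx^*$, so for all large $n$ the points $x^*,x_n,x_{n+1}$ are pairwise distinct. Apply \eqref{Equation paired-Chatterjea} with $x=x^*$, $y=x_n$, $z=x_{n+1}$:
\[
d(Tx^*,x_{n+1})\le\alpha\bigl(d(x^*,x_{n+1})+d(x_n,Tx^*)+d(x_n,x_{n+2})+\delta_{n+1}\bigr).
\]
Next use $d(x^*,Tx^*)\le s\,d(x^*,x_{n+1})+s\,d(x_{n+1},Tx^*)$ and $d(x_n,Tx^*)\le s\,d(x_n,x^*)+s\,d(x^*,Tx^*)$, and note $d(x_n,x_{n+2})\le s(\delta_n+\delta_{n+1})\to0$. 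Substituting yields
\[
d(x^*,Tx^*)\le o(1)+s^2\alpha\, d(x^*,Tx^*).
\]
This closes only if $s^2\alpha<1$, and $\alpha<1/(2s)$ alone does not guarantee that. The first alternative is to take $\limsup$ more carefully. Alternatively, instead of $x=x^*$, pick the middle point $y=x^*$ with $x=x_n$, $z=x_{n+1}$. This gives
\[
d(x_{n+1},Tx^*)+d(Tx^*,x_{n+2})\le\alpha\bigl(d(x_n,Tx^*)+d(x^*,x_{n+1})+d(x^*,x_{n+2})+d(x_{n+1},Tx^*)\bigr).
\]
Then apply the triangle inequality twice, with $d(x^*,Tx^*)$ estimated from both $d(x_{n+1},Tx^*)$ and $d(x_{n+2},Tx^*)$. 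This gives $\tfrac{2}{s}d(x^*,Tx^*)\le o(1)+2\alpha s\,d(x^*,Tx^*)$, and since $2\alpha s<1$ the reduction needs care. I expect to work out the right choice of triple here so that only the factor $2s\alpha<1$ appears.

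Finally, for at most two fixed points: if $u,v,w$ were three distinct fixed points, \eqref{Equation paired-Chatterjea} with $x=u$, $y=v$, $z=w$ would give $d(u,v)+d(v,w)\le2\alpha\bigl(d(u,v)+d(v,w)\bigr)$. Since $2\alpha<1$, this forces $d(u,v)+d(v,w)=0$, a contradiction.
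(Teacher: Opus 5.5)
Your argument that a fixed point excludes points of prime period $2$, your orbit-distinctness argument and your bound of two fixed points are all correct. Placing the fixed point in the middle ($y=x^*$) is slightly cleaner than the paper's choice, since it gives $1\le 2\alpha$ directly with no $b$-triangle step. Calling that direction ``immediate'' is misleading, since it genuinely uses the contraction, but you do supply the proof. One small slip: for the triple $x=x^*$, $y=x_n$, $z=x_{n+1}$ the last term on the right is $d(x_{n+1},Tx_n)=0$, not $\delta_{n+1}$. This is harmless.

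The genuine gap is the step $Tx^*=x^*$, which you leave unresolved (``I expect to work out the right choice of triple''). Both of your computations fail for the same reason. You try to estimate $d(x^*,Tx^*)$ directly, so you route the $Tx^*$-terms on the right through $x^*$ and also bound the left side from below through $x^*$. Each detour costs a factor $s$, and you end with a coefficient like $s^2\alpha$. This does not close under $\alpha<\tfrac{1}{2s}$ once $s>2$; for example $s=4$, $\alpha=0.12$ gives $s^2\alpha>1$.

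The missing idea is to keep $d(x_{n+1},Tx^*)$ as the unknown and route through $x_{n+1}$, not $x^*$. With your second triple $x=x_n$, $y=x^*$, $z=x_{n+1}$ (exactly the paper's), drop $d(Tx^*,x_{n+2})$ from the left and use $d(x_n,Tx^*)\le s\,\delta_n+s\,d(x_{n+1},Tx^*)$. Since $\alpha+s\alpha\le 2s\alpha<1$, this gives
\[
(1-2s\alpha)\,d(x_{n+1},Tx^*)\le s\alpha\,\delta_n+\alpha\,d(x^*,x_{n+1})+\alpha\,d(x^*,x_{n+2})\longrightarrow 0 .
\]
Only after this do you pass to $d(x^*,Tx^*)\le s\,d(x^*,x_{n+1})+s\,d(x_{n+1},Tx^*)$. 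There the factor $s$ multiplies quantities already tending to $0$, so it costs nothing.

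Your first triple also works with the same routing, giving the coefficient $1-s\alpha$. Equivalently, $a_n=d(x_n,Tx^*)$ is bounded and satisfies $a_{n+1}\le\alpha a_n+o(1)$, so $\limsup a_n=0$. This is presumably what your ``take $\limsup$ more carefully'' was reaching for, but as written the proposal carries out none of these, so the key step is missing.
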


\begin{proof}
	Let $T$ be a mapping with no periodic point of prime period 2. Let $x_0\in X$ such that
	$x_1=Tx_0 , ~ x_2=Tx_1,~\ldots, ~ x_n=Tx_{n-1}=T^nx_0 .$
	 If $x_n$ is a fixed point of $T$ for some $n\in \mathbb{N}_0$, then there is nothing to prove. Suppose that $x_n$ is not a fixed point of the mapping $T$ for every $n\in\mathbb{N}_0$, we have $x_{n}\neq Tx_{n}=x_{n+1}$. Additionally, since $T$ has no periodic points of prime period 2, then  $x_{n+1} = T(T(x_{n-1})) \neq x_{n-1}$. Hence $x_{n-1}, x_n, x_{n+1}$ are pairwise distinct.  Putting $x=x_{n-1}, y=x_n, z= x_{n+1}$ in \eqref{Equation paired-Chatterjea}, we get \eqref{same}. Let
	$$p_n = d(x_n, x_{n+1}) + d(x_{n+1}, x_{n+2})$$
	for all $n \in \mathbb{N}$. 
	Since $x_n, x_{n+1}$ and $x_{n+2}$ are pairwise distinct, it follows from \eqref{same} that
	\begin{eqnarray}
		\label{estimate}
		p_1 \leq  \lambda p_0, ~ p_2 \leq  \lambda p_1, ~ p_n \leq  \lambda p_{n-1} \text{ and }
	p_0 > p_1 > \dots > p_n > \dots.
\end{eqnarray}

Next, we show that all points in the orbit $\{x_n:n\in\mathbb{N}_0\}$ are
distinct. Suppose, to the contrary, that $x_i=x_j$ for some
$0\leq i<j$. If $j=i+1$, then $x_i=Tx_i$, contradicting the
assumption that the orbit contains no fixed point. Hence $j\geq i+2$,
and consequently
$x_{j+1}=x_{i+1},~x_{j+2}=x_{i+2}.$
Thus,
\begin{align*}
	p_i
	=d(x_i,x_{i+1})+d(x_{i+1},x_{i+2})
	=d(x_j,x_{j+1})+d(x_{j+1},x_{j+2})
	=p_j,
\end{align*}
which contradicts $p_j<p_i$ for $i<j$ in \eqref{estimate}. Hence, the orbit points are
pairwise distinct. By Lemma \ref{Cauchy lemma}, $\{x_n\}$ is a Cauchy sequence. The completeness of $(X,d)$ implies $x_n\to x^*\in X.$ 
	
	Next, we show that $x^*$ is a fixed point of $T$.  Since the orbit points are pairwise distinct, the point $x^*$ can coincide with at most one point of the orbit. Hence there exists $N \in \mathbb{N}$ such that $x_n \neq x^*, x_{n+1} \neq x^*$
	for all $n \geq N$. Thus, for every $n \geq N$, the three points $x^*, x_n, x_{n+1}$ are pairwise distinct. 
	By \eqref{Equation paired-Chatterjea} and $b$-metric inequality, we obtain
	\begin{align*}
		d(x_{n+1},Tx^*)&=d(Tx_n,Tx^*)\leq d(Tx_n,Tx^*)+d(Tx^*,Tx_{n+1})\\
		&\leq \alpha (d(x_n,Tx^*)+d(x^*,Tx_n)+d(x^*,Tx_{n+1})+d(x_{n+1},Tx^*))\\
		&\leq s\alpha d(x_n,x_{n+1})+s\alpha d(x_{n+1}, Tx^*)+\alpha d(x^*,x_{n+1})+\alpha d(x^*,x_{n+2})+\alpha d(x_{n+1},Tx^*)\\
		&\leq s\alpha d(x_n,x_{n+1})+2s\alpha d(x_{n+1}, Tx^*)+\alpha d(x^*,x_{n+1})+\alpha d(x^*,x_{n+2}).
	\end{align*}
	Thus, we have 
	\begin{eqnarray*}
		(1-2s\alpha)d(x_{n+1}, Tx^*)\leq s\alpha d(x_n,x_{n+1})+\alpha d(x^*,x_{n+1})+\alpha d(x^*,x_{n+2}).
	\end{eqnarray*}
 Therefore, as $n\to \infty$, we obtain
	$\displaystyle	\lim_{n \to \infty}d(x_{n+1}, Tx^*)=0$ since $\alpha< \frac{1}{2s}$.  By the $b$-metric inequality, we have 
	$$d(x^*,Tx^*)\leq s(d(x^*,x_{n+1})+d(x_{n+1},Tx^*)).$$
	Taking the limit as $n \to \infty$, we obtain $d(x^*, Tx^*) = 0$. Therefore, $x^*$ is a fixed point of $T$.
	
	On the other hand, suppose that $T$ has a fixed point $x\in X$. i.e. $Tx=x$ and assume also that $T$ has  a periodic point $y$ of prime period 2. i.e. $T(Ty)=y$. If we let $z=Ty$, then $Tz=y$. If $x=y$ or $y=z$, then $Ty=y$, which contradicts the assumption that $y$ is a periodic point of prime period 2. Also, if $x=z$, then $Ty=z=x=Tx=Tz=y$, which is a contradiction with the same reason as above. Therefore, we may assume that $x,y$ and $z$ are pairwise distinct. In this case, applying inequality \eqref{Equation paired-Chatterjea} and $b$-metric inequality, we obtain
	\begin{align*}
		d(x, z) + d(z, y)& \leq \alpha(d(x, z) + d(y, x))\\
		&\leq \alpha(d(x, z) + sd(y, z)+sd(z,x))\\
		&\leq \alpha (1+s)(d(x, z) + d(y, z)). 
	\end{align*}
	Thus $\alpha\geq \frac{1}{s+1}\geq \frac{1}{2s}$, contradicting the assumption that $\alpha<\frac{1}{2s}$. Hence, $T$ has no periodic point of prime period 2. 
	
	Finally, suppose $T$ has three distinct fixed points, say $Tx = x$, $Ty = y$, and $Tz = z$. Then, \eqref{Equation paired-Chatterjea} yields $\alpha\geq \frac{1}{2}$, contradicting the assumption that $\alpha\in [0,\frac{1}{2s})$. Therefore, $T$ has at most two fixed points.
\end{proof}

The following example shows that the class of paired-Chatterjea contractions properly extends the corresponding class of Chatterjea-type contractions.
\begin{example}
	Let $X = \{a, b, c\}$ and define $d: X \times X \to \mathbb{R}^+$ by
	$$
	d(a, b) = 1,~d(b, c) = 1,~d(a, c) = 6,~d(x,x)=0 \text{ for all } x\in X.
	$$
	Then,  $(X, d)$ is a complete $b$-metric space with $s = 3$. Define $T: X \to X$ by
	$$
	Tx = 
	\begin{cases}
		b, & \text{if } x = a; \\
		c, & \text{if } x \in \{b,c\}; \\
	\end{cases}
	$$
	Clearly $T$  has no periodic points of prime period 2.  By \eqref{Equation paired-Chatterjea}, the six permutations of $(a,b,c)$ yield the following ratios: $(a,b,c)$ and $(c,b,a)$ give $1/7$; $(a,c,b)$ and $(b,c,a)$ give $1/8$; and $(b,a,c)$ and $(c,a,b)$ give $2/13$. Therefore, all the hypotheses of Theorem  \ref{Theorem paired-Chatterjea} are satisfied with any $\alpha\in [\frac{2}{13},\frac{1}{6})$ and $\text{Fix}(T)=\{c\}.$ Next, we show that $T$ is not a Chatterjea-type contraction. Indeed, taking $x=a$ and $y=b$, we obtain
$
d(Ta,Tb)=d(b,c)=1
\text{ and }
d(a,Tb)+d(b,Ta)
=d(a,c)+d(b,b)
=6. 
$
Thus, by \eqref{Chatterjea}, we have
$
\alpha\geq\frac{1}{6},
$
which contradicts the hypothesis of Theorem \ref{Theorem1.1}, namely,
$
\alpha<\frac{1}{2s}=\frac{1}{6}.
$
Therefore, $T$ is not a Chatterjea-type contraction.

\end{example}

\begin{remark} 
Putting $s=1$ in Theorem \ref{Theorem paired-Chatterjea}, we get \cite[Theorem 3]{Chand2025}. 
\end{remark}

\section{Paired Contractions}  
In this section, we introduce Paired contractions for single-valued self-mappings on $b$-metric spaces. We retain the terminology of Definition \ref{def4} while considering a broader class of Paired contractions by allowing $\alpha\in[0,1)$. Furthermore, we establish a fixed point theorem for this class of contractions.

\begin{definition} 
	Let $(X,d)$ be a $b$-metric space with $|X| \geq 3$. A mapping $T: X \to X$ is called \textit{Paired contraction} if there exists a constant $\alpha \in [0, 1)$ such that  \eqref{Equation Paired-contraction} 
	holds for all pairwise distinct $x, y, z \in X$.
\end{definition}

\begin{remark}
The pairwise distinctness of the points $x,y,z\in X$ is essential, since without it, the inequality \eqref{Equation Paired-contraction} reduces to the standard contraction condition.
\end{remark}
\begin{lemma}
	\label{Cauchy-Paired-contraction}
	Let $(X,d)$ be a $b$-metric space with $|X|\geq 3$ and let
	$T:X\to X$ be a Paired contraction with $\alpha\in (0,1)$.  If the orbit $\{x_n:n\geq 0\}$, defined by
	$x_{n+1}=Tx_n$, is pairwise distinct, then $\{x_n\}$ is a Cauchy
	sequence.
\end{lemma}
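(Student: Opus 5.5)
Since the orbit points are pairwise distinct, for every $n\geq 1$ the triple $x=x_{n-1}$, $y=x_n$, $z=x_{n+1}$ is admissible in \eqref{Equation Paired-contraction}. This gives
\[
\delta_n+\delta_{n+1}\leq \alpha(\delta_{n-1}+\delta_n),\qquad \delta_n=d(x_n,x_{n+1}).
\]
Setting $p_n=\delta_n+\delta_{n+1}$, we get $p_n\leq \alpha^n p_0$. Hence $\delta_n\leq p_n\leq \alpha^n p_0$, and the consecutive distances decay geometrically. The obstacle is that $\alpha\in(0,1)$ need not satisfy $\alpha s<1$. A naive telescoping with the $b$-triangle inequality would produce the sum $\sum s^k\alpha^{n+k}$, which may diverge. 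Lemma \ref{lem:cauchy} also cannot be applied directly, because we only control $p_n$ and not the ratio $\delta_{n+1}/\delta_n$.

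The plan is to use distinct triples with large gaps rather than consecutive ones. Fix $n<m$. The points $x_n, x_m, x_{n+1}$ (or similar triples) are pairwise distinct, so the paired inequality with, for example, $x=x_{n}$, $y=x_{m}$, $z=x_{n+1}$ gives
\[
d(x_{n+1},x_{m+1})+d(x_{m+1},x_{n+2})\leq \alpha\bigl(d(x_n,x_m)+d(x_m,x_{n+1})\bigr).
\]
Iterating this kind of estimate should show that the quantity $D_n(m)=d(x_n,x_m)+d(x_m,x_{n+1})$ contracts by the factor $\alpha$ under the shift $n\mapsto n+1$, $m\mapsto m+1$. This yields $d(x_{n+k},x_{m+k})\leq \alpha^k C(m-n)$, but only with a gap-dependent constant $C$, so I expect it to fail as it stands.

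The cleaner route I would actually commit to is a contradiction argument in the style of Lemma \ref{Cauchy lemma}. Suppose $\{x_n\}$ is not Cauchy. Then there are $\varepsilon>0$ and indices $m_k>n_k\geq 1$, with $n_k\to\infty$ and $m_k\geq n_k+2$, such that $d(x_{m_k},x_{n_k})\geq\varepsilon$. Apply \eqref{Equation Paired-contraction} to the distinct triple $x_{m_k-1},x_{n_k-1},x_{n_k}$. This gives
\[
d(x_{m_k},x_{n_k})\leq d(x_{m_k},x_{n_k})+\delta_{n_k}\leq \alpha\bigl(d(x_{m_k-1},x_{n_k-1})+\delta_{n_k-1}\bigr).
\]
Iterating backwards $j$ steps, always with the triple $x_{m_k-i},x_{n_k-i},x_{n_k-i+1}$, gives
\[
d(x_{m_k},x_{n_k})\leq \alpha^j d(x_{m_k-j},x_{n_k-j})+\sum_{i=1}^{j}\alpha^i\delta_{n_k-i}.
\]
Now choose $j=n_k$. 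The sum is at most $\sum_i\alpha^i\alpha^{n_k-i}p_0=n_k\alpha^{n_k}p_0\to0$. The remaining term is $\alpha^{n_k}d(x_{m_k-n_k},x_0)$, which needs a bound on $d(x_r,x_0)$ that grows slower than $\alpha^{-n_k}$.

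Bounding $d(x_r,x_0)$ is the main difficulty. Apply the same backward iteration with $n=1$: for the triple $x_{r-1},x_0,x_1$ we get $d(x_r,x_1)\leq\alpha(d(x_{r-1},x_0)+\delta_0)$, and the triangle inequality gives $d(x_r,x_0)\leq s(d(x_r,x_1)+\delta_0)$. This yields a recursion $a_r\leq s\alpha a_{r-1}+c$ with $a_r=d(x_r,x_0)$. If $s\alpha<1$ the sequence $(a_r)$ is bounded. Otherwise $a_r\leq C(s\alpha)^r$ with $r=m_k-n_k$, and then I would choose the index differences along the subsequence so that they stay controlled. First I would pick the minimal $m_k$ with $d(x_{m_k},x_{n_k})\geq\varepsilon$; then I would iterate back only down to $n_k/2$, where the orbit segment is already within $\varepsilon$-type bounds. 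Making this boundedness step rigorous is where I expect the real work to lie.
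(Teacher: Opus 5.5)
Your proposal has a genuine gap: it stops at the step that carries the whole difficulty. After iterating back $j=n_k$ steps, you need $\alpha^{n_k}d(x_{m_k-n_k},x_0)\to 0$. Your only control on $a_r=d(x_r,x_0)$ is $a_r\le s\alpha\,a_{r-1}+c$, which allows growth like $(s\alpha)^r$ when $s\alpha\ge 1$. Nothing ties $r=m_k-n_k$ to $n_k$; choosing $m_k$ minimal does not bound $m_k-n_k$. Hence $\alpha^{n_k}(s\alpha)^{m_k-n_k}$ need not tend to $0$. The suggested remedy of iterating back only to $n_k/2$ does not work as stated. Minimality of $m_k$ controls $d(x_l,x_{n_k})$ for $n_k<l<m_k$, but after shifting back the smaller index has also moved to $n_k/2$, and minimality says nothing about $d(x_{m_k-n_k/2},x_{n_k/2})$.

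The missing idea is to pay the $b$-metric constant only once per block of a \emph{fixed} number $n_0$ of contraction steps, with $n_0$ chosen so that $s^2\alpha^{n_0}<1$. Your recursion pays $s$ at every single step, which is exactly why you end up with the factor $s\alpha$. The shifted estimate you set aside in your second paragraph is the engine of the paper's proof. Its gap-dependence is harmless because the paper uses it self-referentially:
\[
d(x_{m+n_0},x_{n+n_0})\le \alpha^{n_0}d(x_m,x_n)+n_0\alpha^{n+n_0}(\delta_0+\delta_1),
\]
combined with
\[
d(x_m,x_n)\le s\,d(x_m,x_{m+n_0})+s^2d(x_{m+n_0},x_{n+n_0})+s^2d(x_{n+n_0},x_n).
\]
The two end terms have the fixed gap $n_0$ and decay like $n\alpha^n$; this follows by applying the paired inequality to $x_{n+n_0-1},x_{n-1},x_n$. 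The result is that $(1-s^2\alpha^{n_0})\,d(x_m,x_n)$ is bounded by a quantity tending to $0$ uniformly in $m>n$.

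Your contradiction scheme can be repaired in the same spirit. Take $m_k$ minimal and iterate back a fixed number $j$ of steps, where $s\alpha^j<1$. If $m_k-n_k>j$, then
\[
d(x_{m_k-j},x_{n_k-j})\le s\,d(x_{m_k-j},x_{n_k})+s\,d(x_{n_k},x_{n_k-j})<s\varepsilon+o(1).
\]
Here the first bound uses minimality, and the second term tends to $0$ because its gap $j$ is fixed. This gives $\varepsilon\le s\alpha^j\varepsilon+o(1)$, a contradiction. The alternative, $m_k-n_k\le j$ infinitely often, is impossible because bounded-gap distances tend to $0$.
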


\begin{proof}
	Let $\delta_n=d(x_n,x_{n+1})$ for all $n\in\mathbb{N}_0.$
	Since the points $x_{n-1},x_n,x_{n+1}$ are pairwise distinct, applying
	\eqref{Equation Paired-contraction} to these three points gives
	\begin{equation}
		\label{Paired-Banach Inequality}
		d(x_n,x_{n+1})+d(x_{n+1},x_{n+2})
	\leq
	\alpha\left(d(x_{n-1},x_n)+d(x_n,x_{n+1})\right).
	\end{equation}
	Hence
	$
		\delta_n+\delta_{n+1}
		\leq
		\alpha(\delta_{n-1}+\delta_n),
		~ n\geq1.$
	Consequently, by induction,
	\begin{equation*}
		\label{A2}
		\delta_n+\delta_{n+1}
		\leq
		\alpha^n(\delta_0+\delta_1),
		~ n\in\mathbb{N}_0.
	\end{equation*}
	In particular,
	\begin{equation}
		\label{A3}
		\delta_n\leq \alpha^n(\delta_0+\delta_1),
		~ n\in\mathbb{N}_0.
	\end{equation}
	Since $\alpha\in(0,1)$,  there exists $n_0\geq 2$ such that
	$s^2\alpha^{n_0}<1.$ We first establish an estimate for shifted orbit points. Let
	$m>n$ and put
	$$
	A_j=d(x_{m+j},x_{n+j}),~ j\geq0.
	$$
	For $j\geq1$ and $m\neq n+1$, the points
	$x_{m+j-1},x_{n+j-1},x_{n+j}$ are pairwise distinct (if $m=n+1$, we have $d(x_m,x_n)=\delta_n\to 0$ as $n\to\infty$). Hence, applying
	\eqref{Equation Paired-contraction} to these points, we obtain
	\begin{align*}
		A_j+\delta_{n+j}
		&\leq
		\alpha\left(A_{j-1}+\delta_{n+j-1}\right).
	\end{align*}
	Therefore,
$A_j	\leq
		\alpha A_{j-1}
		+\alpha\delta_{n+j-1}.$
	Using \eqref{A3}, we obtain
	$
	A_j
	\leq
	\alpha A_{j-1}
	+\alpha^{n+j}(\delta_0+\delta_1).
	$
	Iterating this inequality for $j=1,\ldots,n_0$ yields
	$
	A_{n_0}
	\leq
	\alpha^{n_0}A_0
	+n_0\alpha^{n+n_0}(\delta_0+\delta_1).
	$
	Thus,
	\begin{align}
		\label{A6}
		d(x_{m+n_0},x_{n+n_0})
		&\leq
		\alpha^{n_0}d(x_m,x_n)
		+n_0\alpha^{n+n_0}(\delta_0+\delta_1).
	\end{align}
	
	Next, applying the $b$-metric inequality twice, we obtain
	\begin{align*}
		d(x_m,x_n)
		\leq
		s\left(d(x_m,x_{m+n_0})
		+d(x_{m+n_0},x_n)\right)
		\leq
		s\left(d(x_m,x_{m+n_0})
		+s[d(x_{m+n_0},x_{n+n_0})
		+d(x_{n+n_0},x_n)]\right).
	\end{align*}
	Hence,
	\begin{align}
		\label{A7}
		d(x_m,x_n)&\leq s\,d(x_m,x_{m+n_0}) +s^2d(x_{m+n_0},x_{n+n_0})
		+s^2d(x_{n+n_0},x_n).
	\end{align}
	We next estimate the two end terms in \eqref{A7}. Define
	$
	P_n=d(x_{n+n_0},x_n),~ n\in\mathbb{N}_0.
	$
	For $n\geq1$, applying \eqref{Equation Paired-contraction} to the
	three points
	$x_{n+n_0-1},x_{n-1},x_n$ gives
	$$
	d(x_{n+n_0},x_n)+d(x_n,x_{n+1})
	\leq
	\alpha\left(
	d(x_{n+n_0-1},x_{n-1})
	+d(x_{n-1},x_n)
	\right).
	$$
	Therefore,
	$
	P_n+\delta_n
	\leq
	\alpha(P_{n-1}+\delta_{n-1}),
	$
	and hence
	$
	P_n
	\leq
	\alpha P_{n-1}+\alpha\delta_{n-1}.
	$
	Iterating this inequality, we obtain
	$$
	P_n
	\leq
	\alpha^nP_0+
	\sum_{k=1}^{n}\alpha^{n-k+1}\delta_{k-1}.
	$$
	By \eqref{A3}, we have
	\begin{equation}
		\label{A8}
		d(x_{n+n_0},x_n)
		\leq
		\alpha^n d(x_{n_0},x_0)
		+n\alpha^n(\delta_0+\delta_1).
	\end{equation}
	Similarly,
	\begin{equation}
		\label{A9}
		d(x_{m+n_0},x_m)
		\leq
		\alpha^m d(x_{n_0},x_0)
		+m\alpha^m(\delta_0+\delta_1).
	\end{equation}
	Substituting \eqref{A6}, \eqref{A8}, and \eqref{A9} into
	\eqref{A7}, we obtain
	\begin{align*}
		d(x_m,x_n)
		&\leq
		s\alpha^m d(x_{n_0},x_0)
		+s m\alpha^m(\delta_0+\delta_1)
		+s^2\alpha^{n_0}d(x_m,x_n)
		+s^2n_0\alpha^{n+n_0}(\delta_0+\delta_1)\\
		&\qquad
		+s^2\alpha^n d(x_{n_0},x_0)
		+s^2n\alpha^n(\delta_0+\delta_1).
	\end{align*}
	Therefore,
	\begin{equation}
		\label{A10}
		\begin{aligned}
			(1-s^2\alpha^{n_0})d(x_m,x_n)
			&\leq
			s\alpha^m d(x_{n_0},x_0)
			+s^2\alpha^n d(x_{n_0},x_0)
			+s m\alpha^m(\delta_0+\delta_1)
			+s^2n\alpha^n(\delta_0+\delta_1)\\
			&\qquad
			+s^2n_0\alpha^{n+n_0}(\delta_0+\delta_1).
		\end{aligned}
	\end{equation}
	Set
	$
	\displaystyle r_n:=\sup_{k\geq n} k\alpha^k.
	$
	Since $0<\alpha<1$, we have
	$
	k\alpha^k\to0
	~\text{as }k\to\infty,
	$
	and hence
	$
	r_n\to 0
	~\text{as }n\to\infty.
	$
	Since $m>n$ and $0<\alpha<1$, we have
	$
	\alpha^m\leq\alpha^n,~
	m\alpha^m\leq r_n,~
	n\alpha^n\leq r_n.
	$
	It follows from \eqref{A10} that
	\begin{align*}
		d(x_m,x_n)
		\leq
		\frac{(s+s^2)\alpha^n}
		{1-s^2\alpha^{n_0}}
		d(x_{n_0},x_0)
		+
		\frac{(s+s^2)r_n+s^2n_0\alpha^{n+n_0}}
		{1-s^2\alpha^{n_0}}
		(\delta_0+\delta_1).
	\end{align*}
	The right-hand side is independent of $m$. Moreover,
	$\alpha^n\to 0,~
	r_n\to 0,~
	\alpha^{n+n_0}\to0
	\text{ as }n\to\infty.$
	Hence
	$
\displaystyle	\lim_{n\to\infty}\sup_{m>n}d(x_m,x_n)=0,
	$
	which proves that $\{x_n\}$ is a Cauchy sequence.
\end{proof}

\begin{theorem}\label{Theorem Paired-contraction}
	Let $(X,d)$ be a complete $b$-metric space with $|X|\geq 3$, and let $T:X\to X$ be a Paired contraction. Then $T$ has a fixed point in $X$ if and only if $T$ has no periodic points of prime period 2. Moreover, the number of fixed points of $T$ is at most two. 
\end{theorem}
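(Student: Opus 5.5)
We follow the same scheme as the proof of Theorem \ref{Theorem paired-Chatterjea}, with Lemma \ref{Cauchy-Paired-contraction} replacing Lemma \ref{Cauchy lemma}.

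\emph{Existence.} Assume $T$ has no periodic point of prime period $2$. Fix $x_0\in X$ and put $x_{n+1}=Tx_n$. If some $x_n$ is a fixed point, we are done. Otherwise $x_n\neq x_{n+1}$ for all $n$, and $x_{n+1}\neq x_{n-1}$ since there is no $2$-periodic point. Hence $x_{n-1},x_n,x_{n+1}$ are pairwise distinct, and \eqref{Equation Paired-contraction} gives \eqref{Paired-Banach Inequality}.

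Set $p_n=d(x_n,x_{n+1})+d(x_{n+1},x_{n+2})$. Then $p_n\le\alpha p_{n-1}$, and $p_0>0$ because $x_0\neq x_1$.

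If $\alpha=0$, then $p_1=0$, so $x_1=x_2$, i.e.\ $x_1$ is fixed, contradicting the assumption. So we may take $\alpha\in(0,1)$. Since every $p_n>0$, the sequence $(p_n)$ is strictly decreasing. As in Theorem \ref{Theorem paired-Chatterjea}, a coincidence $x_i=x_j$ with $j\ge i+2$ would force $p_i=p_j$, which is impossible. Thus the orbit is pairwise distinct.

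Lemma \ref{Cauchy-Paired-contraction} then shows $\{x_n\}$ is Cauchy, and completeness gives $x_n\to x^*$.

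\emph{$x^*$ is a fixed point.} The limit $x^*$ equals at most one orbit point, so for large $n$ the points $x_n,x^*,x_{n+1}$ are pairwise distinct. Applying \eqref{Equation Paired-contraction} with $y=x^*$ gives
\[
d(x_{n+1},Tx^*)\le d(Tx_n,Tx^*)+d(Tx^*,Tx_{n+1})\le\alpha\bigl(d(x_n,x^*)+d(x^*,x_{n+1})\bigr)\to 0.
\]
Then $d(x^*,Tx^*)\le s\bigl(d(x^*,x_{n+1})+d(x_{n+1},Tx^*)\bigr)\to0$, so $Tx^*=x^*$. No condition on $s$ enters here, which is why the full range $\alpha<1$ suffices.

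\emph{Converse.} Suppose $Tx=x$ and $y$ has prime period $2$, and set $z=Ty$. As in the earlier proof, $x,y,z$ are pairwise distinct. Apply \eqref{Equation Paired-contraction} to the triple $(x,y,z)$:
\[
d(x,z)+d(z,y)\le\alpha\bigl(d(x,y)+d(y,z)\bigr).
\]
Apply it also to the triple $(x,z,y)$:
\[
d(x,y)+d(y,z)\le\alpha\bigl(d(x,z)+d(z,y)\bigr).
\]
Adding the two inequalities yields
\[
(1-\alpha)\bigl(d(x,y)+d(x,z)+2d(y,z)\bigr)\le0,
\]
which forces $y=z$, a contradiction. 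This avoids the triangle inequality entirely.

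\emph{At most two fixed points.} If $x,y,z$ were three distinct fixed points, \eqref{Equation Paired-contraction} would give $d(x,y)+d(y,z)\le\alpha\bigl(d(x,y)+d(y,z)\bigr)$, so $\alpha\ge1$, a contradiction.

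\emph{Main obstacle.} The delicate point is the degenerate case $\alpha=0$, since Lemma \ref{Cauchy-Paired-contraction} is stated only for $\alpha\in(0,1)$; it is dealt with above by showing the orbit reaches a fixed point immediately. Everything else reduces to the earlier lemma.
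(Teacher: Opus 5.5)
Your proof is correct and follows the paper's argument essentially step for step: a pairwise distinct orbit via the strictly decreasing $p_n$, Lemma \ref{Cauchy-Paired-contraction} for the Cauchy property, the triple $(x_n,x^*,x_{n+1})$ to pass to the limit, the two triples $(x,y,z)$ and $(x,z,y)$ for the converse, and the direct estimate for three fixed points. The only deviations are cosmetic. You dispose of $\alpha=0$ via $p_1\le 0$ rather than by showing $T$ is constant, you add the two converse inequalities instead of chaining them to $A\le\alpha^2A$, and you omit the paper's case where $x^*$ occurs infinitely often in the orbit, which is vacuous once the orbit is pairwise distinct.
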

\begin{proof}
	Let $T$ be a mapping with no periodic point of prime period 2. Let $x_0\in X$ such that
$x_1=Tx_0 , ~ x_2=Tx_1,~\ldots, ~ x_n=Tx_{n-1}=T^nx_0 .$
If $x_n$ is a fixed point of $T$ for some $n\in \mathbb{N}_0$, then there is nothing to prove. Suppose that $x_n$ is not a fixed point of the mapping $T$ for every $n\in\mathbb{N}_0$, we have $x_{n}\neq Tx_{n}=x_{n+1}$. Additionally, since $T$ has no periodic points of prime period 2, then  $x_{n+1} = T(T(x_{n-1})) \neq x_{n-1}$. Hence $x_{n-1}, x_n, x_{n+1}$ are pairwise distinct. Substituting $x=x_{n-1}, y=x_n, z= x_{n+1}$ in \eqref{Equation Paired-contraction}, we obtain \eqref{Paired-Banach Inequality}. Let 
$p_n = d(x_n, x_{n+1}) + d(x_{n+1}, x_{n+2})$
for all $n\in \mathbb{N}_0$. Then 
\begin{eqnarray}
	\label{estimate1}
	p_1 \leq  \alpha  p_0, ~ p_2 \leq  \alpha  p_1, ~ p_n \leq  \alpha  p_{n-1}\leq \alpha^np_0 \text{ and }
	p_0 > p_1 > \dots > p_n > \dots.
\end{eqnarray}

Next, we show that all points in the orbit $\{x_n:n\in\mathbb{N}_0\}$ are
distinct. Suppose, to the contrary, that $x_i=x_j$ for some
$0\leq i<j$. If $j=i+1$, then $x_i=Tx_i$, contradicting the
assumption that the orbit contains no fixed point. Hence $j\geq i+2$,
and consequently
$x_{j+1}=x_{i+1},~x_{j+2}=x_{i+2}.$
Thus,
\begin{align*}
	p_i
	=d(x_i,x_{i+1})+d(x_{i+1},x_{i+2})
	=d(x_j,x_{j+1})+d(x_{j+1},x_{j+2})
	=p_j,
\end{align*}
which contradicts $p_j<p_i$ for $i<j$ in \eqref{estimate1}.

We are now ready to show that $\{x_n\}$ is a Cauchy sequence. 
If $\alpha=0$, then by \eqref{Equation Paired-contraction}, we have  $d(Tx,Ty)+d(Ty,Tz)=0.$
Consequently, 
$d(Tx,Ty)=d(Ty,Tz)=0$ and hence $Tx=Ty=Tz$
for every pairwise distinct $x,y,z\in X$. Since $|X|\geq 3$, this implies that $T$ is a constant mapping. Thus, there exists $p\in X$ such that
$T(x)=p,~ x\in X.$ In particular, $T(p)=p,$ so $p$ is a fixed point of $T$. Therefore, $T$ has a fixed point. Now suppose $\alpha\in (0,1)$. By Lemma  \ref{Cauchy-Paired-contraction}, $\{x_n\}$ is a Cauchy sequence.

	Let us prove that $Tx^*=x^*$. Consider the set $\mathcal{B}=\{n\in \mathbb{N}_0:x_n=x^*\}$. 
	\paragraph*{\textbf{Case 1}} If $\mathcal{B}$ is an infinite set, then there exists a subsequence $x_{n_k} \to x^*$ and hence $x_{n_k+1}=Tx_{n_k}=Tx^*.$ Since $x_{n_k+1}$ is also a subsequence of a convergent sequence. Thus $x_{n_k+1}\to x^*$. Hence $Tx^*=x^*.$
	 	\paragraph*{\textbf{Case 2}} If $\mathcal{B}$ is a finite set, then there exists $N\in \mathbb{N}$ such that $x_n\neq x^*$ for all $n\geq N$. Since the orbit points are distinct, the points $x^*,x_{n}$ and  $x_{n+1}$ are pairwise distinct for every $n\geq N+1$. Thus, by $b$-metric inequality and inequality \eqref{Equation Paired-contraction} for each $n\geq N$, we have
	\begin{align*}
	d(x^*, Tx^*) &\leq s(d(x^*, x_{n+1}) + d(x_{n+1}, Tx^*))= s(d(x^*, x_{n+1}) + d(Tx_{n}, Tx^*)) \\
	&\leq s d(x^*, x_{n+1}) + s(d(Tx_{n}, Tx^*) + d(Tx^*, Tx_n)) \\
	&\leq s d(x^*, x_{n+1}) + s\alpha (d(x_{n}, x^*) + d(x^*, x_{n+1})).
\end{align*}
Taking the limit as $n \to \infty$, we obtain
	$d(x^*, Tx^*)=0.$
Hence, $Tx^*= x^*$ as desired.  In both cases, we can conclude that $T$ has a fixed point in $X$. 

On the other hand, suppose that $T$ has a fixed point $x\in X$ and a periodic point $y\in X$ of prime period $2$, so that $T(Ty)=y$. Let $z=Ty$. Then $Tz=y$. If $x=y$ or $y=z$, then $Ty=y$, contradicting the fact that $y$ has prime period $2$. Similarly, if $x=z$, then
$Ty=z=x=Tx=Tz=y,$
which again implies $Ty=y$, a contradiction. Hence, $x,y,z$ are pairwise distinct. Applying \eqref{Equation Paired-contraction} to the ordered triple $(x,y,z)$, we obtain
\begin{align}
	\label{A}
	d(x,z)+d(y,z)\leq\alpha\left(d(x,y)+d(y,z)\right).
\end{align}
Similarly, applying \eqref{Equation Paired-contraction} to the
ordered triple $(x,z,y)$ yields
\begin{align}
	\label{B}
	d(x,y)+d(y,z)
\leq
\alpha\left(d(x,z)+d(y,z)\right).
\end{align}
Set
$
A=d(x,z)+d(y,z)>0
~\text{and}~
B=d(x,y)+d(y,z)>0.
$
Then \eqref{A} and \eqref{B} imply
$A\leq\alpha B \text{ and } B\leq\alpha A.$
Therefore,
$A\leq\alpha B\leq\alpha^2 A.$
Since $A>0$, we obtain
$\alpha\geq1.$
This contradicts the assumption
$\alpha\in[0,1).$
Hence, $T$ has no periodic points of prime period $2$. 

Finally, let $x,y$ and $z$ be three distinct fixed points, say $Tx = x$, $Ty = y$, and $Tz = z$. Then, the inequality \eqref{Equation Paired-contraction} yields $\alpha \ge 1$, contradicting the assumption that $\alpha\in [0,1)$. Therefore, $T$ has at most two fixed points. 
\end{proof}

\begin{remark}
Theorem \ref{Theorem Paired-contraction} extends \cite[Theorem 2]{Bouker2026} by enlarging the admissible range of the contraction coefficient from $\alpha\in\left[0,\frac{1}{s}\right)$ to $\alpha\in[0,1)$. Moreover, we establish the converse of \cite[Theorem 2]{Bouker2026}.
Furthermore, by setting $s=1$ in Theorem \ref{Theorem Paired-contraction}, we get \cite[Theorem 2.1]{Chand2024}.
\end{remark}

The following example illustrates that the class of Paired contractions covered by Theorem \ref{Theorem Paired-contraction}  is strictly larger than the class considered in  \cite[Theorem 2]{Bouker2026}.
\begin{example}
	Let $X = \{a, b, c\}$ and define $d: X \times X \to \mathbb{R}_+$ by
	$$
	d(a,b) = d(b,c) = 1, ~ d(a,c) = 4, \text{ and } d(x,x)=0 \text{ for all } x\in X.
	$$
	Clearly, $(X,d)$ is a complete $b$-metric space. Now, define $T: X \to X$ by
	$$
	Ta = b,~Tb= c,~Tc = c.
	$$
	Clearly, $T$ has no periodic points of prime period 2 and $\text{Fix}(T)=\{c\}$. By \eqref{Equation Paired-contraction}, the six permutations of $(a,b,c)$ yield the following lower bounds for $\alpha$: $(a,b,c)$ and $(c,b,a)$ give $\alpha \geq 1/2$; $(a,c,b)$ and $(b,c,a)$ give $\alpha \geq 1/5$; and $(b,a,c)$ and $(c,a,b)$ give $\alpha \geq 2/5$. Hence, $T$ satisfy all the hypotheses of Theorem \ref{Theorem Paired-contraction}  for every
$\alpha\in\left[\frac{1}{2},1\right)$.  On the other hand, since $s=2$, the contraction coefficient in \cite[Theorem 2]{Bouker2026} is required to satisfy
$\alpha\in\left[0,\frac{1}{2}\right).$
\end{example}

The following example shows that the assumption in Theorem \ref{Theorem Paired-contraction} that $T$ has no periodic point of prime period $2$ is essential. Indeed, we construct a Paired contraction $T$ on a $b$-metric space $X$ that has periodic points of prime period $2$ but no fixed point.

\begin{example}
	Let
	$
	X=\{a,b,c\},
	$
	and define $d:X\times X\to[0,\infty)$ by
	$$
	d(a,b)=1,~
	d(a,c)=2, ~
	d(b,c)=6, \text{ and } d(x,x)=0~ \forall ~x\in X.
	$$
	Then $(X,d)$ is a $b$-metric space with coefficient $s=2$.  Define $T:X\to X$ by
	$$
	Ta=b,~ Tb=a,~ Tc=a.
	$$
	Clearly, $T^2a=a,~ Ta\neq a,$
	and
	$T^2b=b,~ Tb\neq b.$
	Thus, $a$ and $b$ are periodic points of $T$ of prime period $2$ and 
	$\operatorname{Fix}(T)=\emptyset.$ For the six permutations $(a,b,c)$, namely $(a,b,c), (a,c,b), (b,a,c),
	(b,c,a),\\ (c,a,b),(c,b,a),$
	the corresponding ratios
	are, respectively,
	$
	\frac{1}{7}, \frac{1}{8}, \frac{2}{3},
	\frac{1}{8}, \frac{2}{3}, \frac{1}{7}.$ 
	Hence, the maximum is $\frac{2}{3}$, and therefore $T$ satisfies the Paired contraction condition for
$\alpha\in\left[\frac{2}{3},1\right).$
\end{example}

The next example demonstrates the applicability of Theorem \ref{Theorem Paired-contraction} and illustrates the significance of its hypotheses.
           \begin{example}
           	Let $X = \{0\} \cup [1, 2] \cup \{3\}$, and let $d : X \times X \to [0, \infty)$ be defined by
           	$$
           	d(x, y) = |x - y|^2~\text{for all } x, y \in X.$$
           	Then $(X, d)$ is a complete $b$-metric space with $s = 2$. Define a mapping $T : X \to X$ by
           \begin{equation*}
           		T x =
           	\begin{cases}
           		\frac{x}{2} + 1 & \text{if } x \in [1, 2], \\[0.5mm]
           		1 & \text{if } x = 0, \\[0.5mm]
           		2 & \text{if } x = 3.
           	\end{cases}
           \end{equation*}
           	We claim that $T$ is a Paired contraction with $\alpha = \frac{1}{4}$. We verify this in three cases.           	
           	\subsection*{Case 1: All $x, y, z \in [1, 2]$.}
           	Then
           	$$
           	d(Tx, Ty) + d(Ty, Tz)
           	= \frac{1}{4} \left( (x-y)^2 + (y-z)^2 \right)
           	= \frac{1}{4} \left( d(x,y) + d(y,z) \right).
           	$$
           	\subsection*{Case 2: Two points in $[1,2]$ and one isolated point.}
           	Let $x, y \in [1,2]$, $x \neq y$, and let $z$ be an isolated point.
           	
           	\paragraph{\textbf{Subcase 2.1: $z = 0$.}}
           	Then $T x = \frac{x}{2} + 1$, $T y = \frac{y}{2} + 1$, and $T(0) = 1$. 
           	Therefore
           	$$
           	d(Tx, Ty) + d(Ty, T0)
           	= \frac{1}{4}\left((x-y)^2 + y^2\right)
           	= \frac{1}{4} \left( d(x, y) + d(y, 0) \right).
           	$$
           	
           	\paragraph{\textbf{Subcase 2.2: $z = 3$.}}
           	Then $T x = \frac{x}{2} + 1$, $T y = \frac{y}{2} + 1$, and $T(3) = 2$.
           	Therefore
           	$$
           	d(Tx, Ty) + d(Ty, T3)=\frac{1}{4}
           	\left((x-y)^2 + (2-y)^2\right)	\leq \frac{1}{4} \left( d(x, y) + d(y, 3) \right),$$
           	since $(2-y)^2 \leq (3-y)^2$ for $y \in [1,2]$. 
           	
           	\paragraph{\textbf{Subcase 2.3: $y = 0$.}}
           	Let $x,z\in[1,2]$ with $x\neq z$. Then
           	$
           	Tx=\frac{x}{2}+1,~ T0=1,~ Tz=\frac{z}{2}+1.
           	$
           	Hence,
           	\begin{align*}
           		d(Tx,T0)+d(T0,Tz)=\frac{x^2}{4}+\frac{z^2}{4}=\frac14\left(d(x,0)+d(0,z)\right).
           	\end{align*}       	
           	\paragraph{\textbf{Subcase 2.4: $y = 3$.}}
           	Let $x,z\in[1,2]$ with $x\neq z$, and let $y=3$. Then
           	$$
           	Tx=\frac{x}{2}+1,~ T3=2,~ Tz=\frac{z}{2}+1.
           	$$
           	Therefore,
           	\begin{align*}
           		d(Tx,T3)+d(T3,Tz)
           		=\frac{(2-x)^2}{4}+\frac{(2-z)^2}{4}\leq \frac14\left(d(x,3)+d(3,z)\right),
           	\end{align*}
           	since
           	$
           	(2-x)^2\leq(3-x)^2
           	~\text{and}~
           	(2-z)^2\leq(3-z)^2
           	$  for $x,z\in[1,2]$.
           
           	\subsection*{Case 3: One point in $[1,2]$ and two isolated points.}
           	
           	Since $x,y,z$ are pairwise distinct, the two isolated points must be $0$ and $3$, while the remaining point belongs to $[1,2]$. Up to the interchange of $x$ and $z$, there are two possibilities.       	
           	\paragraph{\textbf{Subcase 3.1: $x\in[1,2]$, $y=3$, and $z=0$.}}
           	Then
           	$
           	Tx=\frac{x}{2}+1,~ T3=2,~ T0=1.
           	$
           	Hence,
           	\begin{align*}
           		d(Tx,T3)+d(T3,T0)=\left|\frac{x}{2}+1-2\right|^2+|2-1|^2=\frac{(2-x)^2}{4}+1\leq \frac14\left(d(x,3)+d(3,0)\right),
           	\end{align*}
           	Since $1\leq \frac{9}{4}$ and $(2-x)^2\leq(3-x)^2$ for $x\in[1,2]$.
           	The case $x=0$, $y=3$, $z\in[1,2]$ follows by interchanging $x$ and $z$.
           	
           	\paragraph{\textbf{Subcase 3.2: $x\in[1,2]$, $y=0$, and $z=3$.}}
            Then $T x = \frac{x}{2} + 1$, $T(0) = 1$, $T(3) = 2$.
           	Thus
           $$
           	d(Tx, T0) + d(T0, T3)
           	= \frac{x^2}{4} + 1	\leq \frac{1}{4} \left( d(x, 0) + d(0, 3) \right).
           	$$

           	In all cases, we have shown that
           	\[
           	d(Tx, Ty) + d(Ty, Tz)
           	\leq \frac{1}{4} \left( d(x, y) + d(y, z) \right)
           	\]
           	for all pairwise distinct $x, y, z \in X$. Therefore, $T$ is a Paired contraction with $\alpha = \frac{1}{4}$. Moreover, $T$ has no periodic points of prime period $2$. Indeed, for $x \in [1,2]$,
           	$$
           	T^2 x = T\left(\frac{x}{2} + 1\right) = \frac{1}{2}\left(\frac{x}{2} + 1\right) + 1 = \frac{x}{4} + \frac{3}{2} \neq x,$$
           	unless $x = 2$, but $T(2) = 2$ is a fixed point.
           		Thus, by Theorem \ref{Theorem Paired-contraction}, $\text{Fix}(T)=\{2\}.$
           \end{example}

As a consequence of Theorem \ref{Theorem Paired-contraction}, we recover \cite[Theorem 2.1]{Mitrovic} as a corollary.
\begin{corollary}
	\label{Corollary-Banach}
	Let $(X, d)$ be a nonempty complete $b$-metric space with a mapping $T:X \to X$ such that for all $x,y\in X$
	\begin{eqnarray}
		\label{CG-Banach}
		d(Tx,Ty)\leq \alpha d(x,y)
	\end{eqnarray}
	with $\alpha\in [0,1)$. Then $T$ has a
	unique fixed point.
\end{corollary}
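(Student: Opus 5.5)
Since the corollary is stated as a consequence of Theorem \ref{Theorem Paired-contraction}, I will reduce to it, treating separately the degenerate case $|X|\leq 2$, where Theorem \ref{Theorem Paired-contraction} does not apply.

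\textbf{Case $|X|\geq 3$.} For pairwise distinct $x,y,z\in X$, apply \eqref{CG-Banach} to the pairs $(x,y)$ and $(y,z)$ and add. This gives $d(Tx,Ty)+d(Ty,Tz)\leq\alpha(d(x,y)+d(y,z))$, so $T$ is a Paired contraction with the same $\alpha\in[0,1)$. Next I check that $T$ has no periodic point of prime period 2. If $T^2y=y$ with $Ty=z\neq y$, then \eqref{CG-Banach} gives $d(z,y)=d(Ty,Tz)\leq\alpha d(y,z)$. Since $d(y,z)>0$, this forces $\alpha\geq1$, a contradiction. Theorem \ref{Theorem Paired-contraction} then yields a fixed point.

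\textbf{Uniqueness.} I will not use the bound of at most two fixed points from the theorem. Instead I argue directly: if $Tx=x$ and $Ty=y$ with $x\neq y$, then $d(x,y)\leq\alpha d(x,y)$, which is impossible for $\alpha<1$.

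\textbf{Case $|X|\leq 2$.} If $X=\{p\}$, then $p$ is trivially fixed. If $X=\{p,q\}$ with no fixed point, then $T$ swaps $p$ and $q$, so $d(Tp,Tq)=d(q,p)\leq\alpha d(p,q)$, again contradicting $\alpha<1$. Hence a fixed point exists, and uniqueness follows by the same argument as above.

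None of these steps is hard. The only point needing care is the $|X|\leq2$ case, since Theorem \ref{Theorem Paired-contraction} requires $|X|\geq 3$.
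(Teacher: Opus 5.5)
Your proposal is correct and follows the paper's proof essentially step for step. You treat $|X|\le 2$ separately, as the paper does; the paper lists the four self-maps of a two-point set, while you observe that a fixed-point-free map must be the swap, which is equivalent. For $|X|\ge 3$ you use the same summation showing $T$ is a Paired contraction, the same estimate $d(Ty,T(Ty))\le\alpha\, d(y,Ty)$ ruling out points of prime period $2$ before invoking Theorem~\ref{Theorem Paired-contraction}, and the same direct uniqueness argument $d(x,y)\le\alpha\, d(x,y)$.
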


\begin{proof}
If $|X|=1$, the unique point of $X$ is trivially a fixed point.	If $|X| = 2$, then there are four possible cases as follows: $Tx=y$ and $Ty=y$, $Tx=x$ and $Ty=x$, $Tx=y$ and $Ty=x$, $Tx=x$ and $Ty=y$. The first two cases yield a unique fixed point, while the remaining two cases would force $\alpha\geq 1$, thereby contradicting the assumption that $\alpha\in [0,1)$.
	
	Now, suppose that $|X| \ge 3$. Assume, for contradiction, that there exists $x \in X$ such that $T(Tx) = x$. Then $d(Tx, x) = d(Tx, T(Tx))\leq \alpha d(Tx,x)$, which contradicts that $\alpha\in [0,1)$. Hence, $T$ has no periodic points of prime period 2. Let $x,y,z$ be distinct points in $X$. Applying \eqref{CG-Banach} to the pairs $\{y,z\}$ and summing the resulting inequalities yields \eqref{Equation Paired-contraction}. Thus, by Theorem \ref{Theorem Paired-contraction},  $T$ must have a fixed point. Finally, suppose $Tx = x$ and $Ty = y$ with $x \neq y$. Then
	$$d(x, y) = d(Tx, Ty) \leq \alpha \, d(x, y),$$
	which implies $\alpha \geq 1$, a contradiction. Thus $x = y$. Therefore $T$ has a unique fixed point.
\end{proof}

\section{paired-Kannan Contractions}
In this section, we investigate Paired--Kannan contractions in $b$-metric spaces. We observe that the corresponding result in \cite[Theorem 3]{Bouker2026} assumes that the contraction constant belongs to $\left[0,\frac{1}{2}\right)$. However, when $s>2$, this condition is insufficient for the $b$-metric limit argument used to establish the fixed point property. To address this issue, we explicitly incorporate the missing dependence on the $b$-metric coefficient into Definition \ref{Withouts} as follows:

\begin{definition} 
	Let $(X,d)$ be a $b$-metric space with $|X| \geq 3$. A mapping $T: X \to X$ is called \textit{paired-Kannan contraction} if there exists a constant 
	\begin{equation}
		\label{alpha}
		\alpha<\begin{cases}
			\frac{1}{2}, ~~1\leq s\leq 2;\\
			\frac{1}{s}, ~~s>2.
		\end{cases}
	\end{equation}
	such that the inequality \eqref{Equation paired-Kannan}
	holds for all pairwise distinct $x, y, z \in X$.
\end{definition}
\begin{remark}
	The condition \eqref{alpha} is equivalent to $\alpha<\min\left\{\frac{1}{2},\frac{1}{s}\right\}.$
Moreso, the Kannan-type contraction and the paired-Kannan contraction are distinct (see  \cite[Example 2.6]{Chand2024b}).
\end{remark}
\begin{proposition}
	The hypotheses of \cite[Theorem 3]{Bouker2026} do not imply the existence of a fixed point when $s>2$.
\end{proposition}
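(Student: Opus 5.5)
The plan is to disprove the claim by exhibiting a counterexample. I will build a complete $b$-metric space with some $s>2$ and a self-map $T$ such that:
\begin{enumerate}[label=(\roman*)]
\item $T$ satisfies \eqref{Equation paired-Kannan} with some $\alpha\in[0,\tfrac12)$;
\item $T$ has no periodic point of prime period $2$;
\item $\operatorname{Fix}(T)=\emptyset$.
\end{enumerate}

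The mechanism is to take an orbit that converges to a point $\xi$ whose image is far from $\xi$. The limit argument in \cite[Theorem 3]{Bouker2026} then breaks down. Such a failure is possible only because the constant $s$ is large, since Theorem~\ref{Theorem Paired-contraction}-type arguments and \cite[Theorem 2.2]{Mitrovic} succeed when $\alpha<1/s$.

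\textbf{The space and the map.} Let $X=\{\xi\}\cup\{x_n:n\in\mathbb{N}_0\}$, with the $x_n$ pairwise distinct. Define $Tx_n=x_{n+1}$ and $T\xi=x_0$. Then $T$ has no fixed point. It also has no $2$-periodic point, because $T^2x_n=x_{n+2}\neq x_n$ and $T^2\xi=x_1\neq\xi$.

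For the distances I take:
\begin{itemize}
\item consecutive gaps $d(x_n,x_{n+1})=\delta_n:=4^{-n}$;
\item non-consecutive distances $d(x_n,x_m)=\varepsilon\,4^{-\min\{n,m\}}$ for $|n-m|\ge2$, with a small $\varepsilon>0$;
\item $d(x_n,\xi)=\varepsilon\,4^{-n}$ for $n\ge1$ and $d(x_0,\xi)=1$.
\end{itemize}
Thus far points are much closer together than neighbours, and the large $s$ is what permits this. One could adjust $d(x_0,\xi)$ if needed, but the convergence $x_n\to\xi$ is what matters.

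\textbf{Routine checks.} First I verify that $d$ is a $b$-metric and compute its optimal $s$. The worst triangle is $d(x_n,x_{n+1})\le s\,[d(x_n,x_{n+2})+d(x_{n+2},x_{n+1})]$, which forces roughly $s\approx 1/(2\varepsilon)$, so $s>2$ for small $\varepsilon$. I then check that all other triangle inequalities hold with this $s$, which is a finite case analysis on index patterns. Next, completeness: $d(x_n,x_m)\le 4^{-\min\{n,m\}}$, so Cauchy sequences are either eventually constant or converge to $\xi$, since $d(x_n,\xi)\to0$.

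\textbf{The contractive inequality.} This is the main obstacle. I will verify \eqref{Equation paired-Kannan} for all ordered pairwise distinct triples. The key simplification is that the right-hand side always contains $\alpha\sum\delta$ over the three points, each point contributing its own gap $d(w,Tw)$, with $d(\xi,T\xi)=1$.

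\begin{itemize}
\item \emph{Triples containing $\xi$.} The right-hand side is at least $\alpha$. The left-hand side is at most $2\max d\le 2$, so I need $\alpha$ close to $\tfrac12$. I may have to rescale, for instance by taking $d(x_0,\xi)$ larger, so that the left-hand side is at most roughly $0.9$ in units where $d(\xi,T\xi)$ is larger.
\item \emph{Triples of orbit points $x_i,x_j,x_k$.} The left-hand side involves $d(x_{i+1},x_{j+1})$ and $d(x_{j+1},x_{k+1})$. Each such term is at most $4^{-\min-1}$ when the indices are consecutive and at most $\varepsilon 4^{-\min}$ otherwise. The right-hand side is $\alpha(4^{-i}+4^{-j}+4^{-k})$.
\item \emph{The critical pattern.} This is three consecutive indices, which gives the ratio $(4^{-1}+4^{-2})/(1+4^{-1}+4^{-2})=5/21<\tfrac12$. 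I would first check this case and then bound all other patterns by it.
\end{itemize}

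If the bookkeeping fails somewhere, I would replace the base $4$ by a larger base, which sends the consecutive ratio toward $0$, and shrink $\varepsilon$ further. Once every case is bounded, I fix an $\alpha\in[0,\tfrac12)$ that works for all of them. Since $T$ then satisfies the hypotheses of \cite[Theorem 3]{Bouker2026} but has no fixed point, the proposition follows.
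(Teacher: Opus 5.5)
Your strategy is the same as the paper's: an explicit complete $b$-metric space in which an orbit converges to a point whose image is far from it. But there is a genuine gap. The metric you actually specify does not satisfy \eqref{Equation paired-Kannan} for any $\alpha<\tfrac12$, and the case you call critical is not the binding one.

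Since $T\xi=x_0$ and $Tx_0=x_1$, the largest distance $d(x_0,x_1)=1$ appears on the left-hand side whenever $\xi$ and $x_0$ are both in the triple. The right-hand side only gains $d(\xi,T\xi)=d(\xi,x_0)=1$, which is of the same size.
\begin{enumerate}
\item For $(x,y,z)=(\xi,x_0,x_1)$ the left side is $d(x_0,x_1)+d(x_1,x_2)=\tfrac54$ and the right side is $\alpha(1+1+\tfrac14)$. This forces $\alpha\ge\tfrac59$.
\item For $(x_0,\xi,x_m)$ the left side is $1+\varepsilon$ and the right side is $\alpha(2+4^{-m})$. This forces $\alpha\ge\tfrac{1+\varepsilon}{2}$ as $m\to\infty$.
\end{enumerate}
Your stated fallback does not repair this. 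With base $b$ in place of $4$, the first ratio is $(1+b^{-1})/(2+b^{-1})>\tfrac12$ for every $b$, and $\varepsilon$ does not enter it at all.

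Your computation of $s$ is also off. In your triangle, $d(x_{n+2},x_{n+1})=4^{-n-1}$ is itself a consecutive gap, so it only forces $s\ge(\varepsilon+\tfrac14)^{-1}<4$. The binding triangle is $d(x_0,\xi)\le s[d(x_0,x_n)+d(x_n,\xi)]$, which gives $s\approx d(x_0,\xi)/\varepsilon$.

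What is missing is a definite choice making $d(\xi,T\xi)$ dominate $d(T\xi,T^2\xi)$. The rescaling you mention only in passing is the right one. Take $d(x_0,\xi)=2$, keep all other distances, and assume $\varepsilon\le\tfrac14$. Then:
\begin{enumerate}
\item In a triple containing $\xi$, all three images lie on the orbit. If $x_0$ is in the triple, at most one left-hand term equals $d(x_0,x_1)=1$ and the other is at most $\tfrac14$. The right-hand sum is at least $2+1$, so the ratio is at most $\tfrac5{12}$.
\item If $\xi$ is in the triple but $x_0$ is not, the left side is at most $2\varepsilon+\tfrac1{16}$ against a right-hand sum of at least $2$.
\item Orbit-only triples have ratio at most $\max\{\tfrac14,\tfrac{1+\varepsilon}{5}\}=\tfrac14$.
\end{enumerate}
Hence $\alpha=\tfrac5{12}$ works with $s=2/\varepsilon$. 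The rest of your plan (completeness, no fixed points, no $2$-periodic points) goes through unchanged.

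The paper avoids the issue differently. It sends the limit $q$ to an auxiliary point $p$ at distance $\tfrac15$ from every orbit point and distance $1$ from $q$, and $p$ re-enters the orbit at $2$. This gives $s=5$ and $\alpha=\tfrac25$.

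A useful sanity check for any such construction: apply \eqref{Equation paired-Kannan} to $(x_n,\xi,x_{n+1})$ and use the $b$-triangle inequality through both $x_{n+1}$ and $x_{n+2}$. In the limit this gives $2d(\xi,T\xi)\le s\alpha\, d(\xi,T\xi)$. So every counterexample needs $\alpha s\ge 2$, hence $s>4$, and the paper's example sits exactly at $\alpha s=2$.
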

We present an example showing that the stated hypotheses of \cite[Theorem 3]{Bouker2026} do not guarantee the asserted conclusion.  By constructing a complete $b$-metric space $(X, d)$ with coefficient $s = 5$ and a mapping $T: X \to X$ satisfying all the hypotheses of  \cite[Theorem 3]{Bouker2026} with $\alpha = \frac{2}{5} < \frac{1}{2}$, but having no fixed points. 
\begin{example}
	\label{counterexample}
Let $X = \mathbb{N}_0 \cup \{p, q\},$
where $\mathbb{N}_0 = \{0, 1, 2, \dots\}$. Define $d : X \times X \to [0, \infty)$ by 
\begin{eqnarray*}
	\begin{cases}
		d(x, x) = 0,\\
		d(m, n) = 10^{-m} + 10^{-n},~m, n \in \mathbb{N}_0, \\
		d(q, n) = 10^{-n},~n \in \mathbb{N}_0, \\
		d(p, n) = \frac{1}{5},~n \in \mathbb{N}_0,\\
		d(p, q) = 1. 
	\end{cases}
\end{eqnarray*}
\subsection*{\textit{Verify that $(X,d)$ is a $b$-metric space}}
 Symmetry is clear from the definition. We verify the $b$-metric inequality as follows:
\subsubsection*{Case 1: $x, z \in \mathbb{N}_0$, $y = p$}
$d(x, z) \leq 2 = 5 \left( \frac{1}{5} + \frac{1}{5} \right) =5 ( d(x, p) + d(p, z)).$
\subsubsection*{Case 2: $x, z \in \mathbb{N}_0$, $y = q$}
$
d(x, z) = 10^{-x} + 10^{-z} \leq 5 ( 10^{-x} + 10^{-z} ) = 5 ( d(x, q) + d(q, z)).$

\subsubsection*{Case 3: $x = p$, $z = q$, $y \in \mathbb{N}_0$}
$d(p, q) = 1 \leq 5 \left( \frac{1}{5} + 10^{-y} \right)=5 ( d(p, y) + d(y, q)).$

\subsubsection*{Case 4: $x = p$, $z \in \mathbb{N}_0$, $y = q$}
$ d(p, z) = \frac{1}{5}\leq 5 \left( 1 + 10^{-z} \right)=5 ( d(p, q) + d(q, z)).$

\subsubsection*{Case 5: $x = q$, $z \in \mathbb{N}_0$, $y = p$}
$d(q, z) = 10^{-z} \leq 1\leq 5 \left( 1 + \frac{1}{5} \right)=5 ( d(q, p) + d(p, z)).$\\
	The remaining cases are either trivial or follow by symmetry. Hence,
$(X,d)$ is a $b$-metric space with coefficient $s=5$.

\subsection*{\textit{Show that the sequence is Cauchy}}
Let $\{x_n\}$ be the sequence defined by $x_n=n$. For any
$m,n\in\mathbb{N}_0$, we have
$d(m,n)=10^{-m}+10^{-n}.$
Let $\varepsilon>0$. Choose $N\in\mathbb{N}$ such that
$10^{-N}<\frac{\varepsilon}{2}.$
Then, for all $m,n\geq N$,
$$
d(m,n)
=10^{-m}+10^{-n}
\leq 10^{-N}+10^{-N}
=2\cdot10^{-N}
<\varepsilon.
$$
Therefore, $\{x_n\}_{n\in\mathbb{N}_0}$ is a Cauchy sequence in $(X,d)$. In fact, the sequence converges to $q$, since
$
d(x_n,q)=d(n,q)=10^{-n}\to0.$
Hence, $x_n\to q.$

\subsection*{\textit{We now show that $(X,d)$ is complete}}
Let $(x_j)$ be a Cauchy sequence in $X$. If the range of $(x_j)$ is finite, then
$$
\delta:=\min\{d(u,v):u,v\in\operatorname{Ran}\{x_j\},\ u\neq v\}>0.
$$
Since $(x_j)$ is Cauchy, there exists $N$ such that
$
d(x_j,x_k)<\delta,~ j,k\geq N.
$
Hence $x_j=x_k$ for all $j,k\geq N$, so the sequence is eventually constant and therefore convergent. Suppose now that the range of $(x_j)$ is infinite. Choose $N$ such that
$
d(x_j,x_k)<\frac15,~ j,k\geq N.
$
Since $d(p,n)=1/5$ for every $n\in\mathbb N_0$, the tail $\{x_j:j\geq N\}$ cannot contain both $p$ and a natural number. If $p$ occurred infinitely often, the Cauchy property would therefore imply that only finitely many distinct natural numbers could occur in the tail, contradicting the assumption that the range is infinite. Thus, $p$ occurs only finitely many times. Consequently, for all sufficiently large $j$,
$
x_j\in\mathbb N_0\cup\{q\}.
$
Since the range is infinite, infinitely many distinct natural
numbers occur. We claim that the values of these natural-number
terms tend to infinity. Indeed, if some $m\in\mathbb N_0$ occurred
infinitely often while arbitrarily large natural numbers also
occurred, then for a sequence $(n_k)$ of such natural numbers with
$n_k\to\infty$,
$$d(m,n_k)=10^{-m}+10^{-n_k}\to10^{-m}>0,$$
contradicting the Cauchy property. Hence, the natural-number
values occurring in the sequence tend to infinity. Since
$d(n,q)=10^{-n}$, it follows that
$d(x_j,q)\to0.$ Thus $x_j\to q$.
Therefore every Cauchy sequence in $X$ converges in $X$, and hence $(X,d)$ is complete.

\subsection*{ \textit{Define the Mapping}}
Define $T : X \to X$ by
$Tq = p,~Tp = 2,~Tn = n + 1~(n \in \mathbb{N}_0).
$
Clearly, 
$Tq = p \neq q, ~Tp = 2 \neq p,~ Tn = n + 1 \neq n,$ so $T$ has no fixed point. 
Moreover, 
$$T^2 q = Tp = 2 \neq q,~T^2 p = T2 = 3 \neq p,~T^2 n = n + 2 \neq n.$$ Hence, $T$ has no periodic points of prime period 2. 

	\subsection*{\textit{Verify the paired-Kannan Hypotheses}}
	Since both sides of \eqref{Equation paired-Kannan} are unchanged when $x$ and $z$ are
	interchanged, it is enough to consider one representative from each
	pair obtained by the interchange $x\leftrightarrow z$.
	
	\subsubsection*{Case A: $x,y,z\in\mathbb N_0$.}
 Let $x=m$, $y=n$, and $z=k$, where $m,n,k\in\mathbb{N}_0$ are pairwise distinct. Since
$
Tm=m+1,~ Tn=n+1,~ Tk=k+1,
$
we obtain
\begin{align*}
	d(Tm,Tn)+d(Tn,Tk)=d(m+1,n+1)+d(n+1,k+1)
	\leq 0.2\left(10^{-m}+10^{-n}+10^{-k}\right).
\end{align*}
Moreover,
\begin{align*}
	d(m,Tm)+d(n,Tn)+d(k,Tk)=d(m,m+1)+d(n,n+1)+d(k,k+1)
	=1.1	\left(10^{-m}+10^{-n}+10^{-k}\right).
\end{align*}
Thus
\begin{align*}
	d(Tm,Tn)+d(Tn,Tk)\leq
	\frac{2}{11}
	\left(10^{-m}+10^{-n}+10^{-k}\right)=\frac{2}{11}
	\left[d(m,Tm)+d(n,Tn)+d(k,Tk)\right].
\end{align*}
Therefore, \eqref{Equation paired-Kannan} holds for coefficient $\frac{2}{11}$.
	
	\subsubsection*{Case B: Two points belong to $\mathbb N_0$ and the third is $p$.}
	
	Let $m,n\in\mathbb N_0$, $m\neq n$.
	
	\paragraph{\textbf{Subcase B.1: $y=p$.}}
	Let $x=m$, $y=p$, and $z=n$. Then
	$
	Tm=m+1,~ Tp=2,~ Tn=n+1.
	$
	Consequently,
	$$
	\begin{aligned}
		d(Tm,Tp)+d(Tp,Tn)
		=d(m+1,2)+d(2,n+1)\leq
		\frac{1}{10}\left(10^{-m}+10^{-n}\right)
		+\frac{1}{50}.
	\end{aligned}
	$$
	Moreover,
	$$
	d(m,Tm)+d(p,Tp)+d(n,Tn)= \frac{11}{10}\left(10^{-m}+10^{-n}\right)+\frac15.
	$$
	Hence,
	$$
	d(Tm,Tp)+d(Tp,Tn)
	\leq
	\frac{1}{10}
	\left(d(m,Tm)+d(p,Tp)+d(n,Tn)\right).
	$$
	Therefore, \eqref{Equation paired-Kannan} holds for coefficient $\frac{1}{10}$.
	\paragraph{\textbf{Subcase B.2: $p$ is an endpoint.}}
	
By symmetry, it is enough to consider $x=m$, $y=n$, and $z=p$. Then
\begin{align*}
	d(Tm,Tn)+d(Tn,Tp)
	=d(m+1,n+1)+d(n+1,2)\leq
	\frac{1}{10}10^{-m}+\frac{1}{5}10^{-n} +\frac{1}{100}.
\end{align*}
On the other hand,
$$
d(m,Tm)+d(n,Tn)+d(p,Tp)
=
\frac{11}{10}\left(10^{-m}+10^{-n}\right)+\frac15.
$$
Therefore,
$$
d(Tm,Tn)+d(Tn,Tp)
\leq
\frac{21}{130}
\left(d(m,Tm)+d(n,Tn)+d(p,Tp)\right).
$$	
	\subsubsection*{Case C: Two points belong to $\mathbb N_0$ and the third is $q$.}
	
	Let $m,n\in\mathbb N_0$, $m\neq n$.
	
	\paragraph{\textbf{Subcase C.1: $y=q$.}}
	
Let $x=m$, $y=q$, and $z=n$. Then
$
Tm=m+1,~ Tq=p,~ Tn=n+1.
$
Consequently,
\begin{align*}
	d(Tm,Tq)+d(Tq,Tn)
	=d(m+1,p)+d(p,n+1)
	=\frac25.
\end{align*}
Moreover,
$$
\begin{aligned}
	d(m,Tm)+d(q,Tq)+d(n,Tn)
	=d(m,m+1)+d(q,p)+d(n,n+1)
	=1.1\left(10^{-m}+10^{-n}\right)+1.
\end{aligned}
$$
Therefore,
$$
d(Tm,Tq)+d(Tq,Tn)
\leq
\frac25\left(d(m,Tm)+d(q,Tq)+d(n,Tn)\right).
$$

	\paragraph{\textbf{Subcase C.2: $q$ is an endpoint.}}
	
By symmetry, let $x=m$, $y=n$, and $z=q$. Then
$$
d(Tm,Tn)+d(Tn,Tq)
=d(m+1,n+1)+d(n+1,p).
$$
Consequently,
$$
\begin{aligned}
	d(Tm,Tn)+d(Tn,Tq)=
	0.1\left(10^{-m}+10^{-n}\right)+\frac15.
\end{aligned}
$$
On the other hand,
$
	d(m,Tm)+d(n,Tn)+d(q,Tq)
	=1.1\left(10^{-m}+10^{-n}\right)+1.
$
Therefore,
$$
d(Tm,Tn)+d(Tn,Tq)
\leq
\frac15
\left(d(m,Tm)+d(n,Tn)+d(q,Tq)\right).
$$
Hence \eqref{Equation paired-Kannan}  holds in this case with the coefficient $\frac15$.
	
	\subsubsection*{Case D: One point belongs to $\mathbb N_0$ and the other two are $p$ and $q$.}
	
	Let $n\in\mathbb N_0$. Up to the interchange of $x$ and $z$, there
	are three possibilities according to the position of the natural
	number and the isolated points.
	
\paragraph{\textbf{Subcase D.1: $x=p$, $y=q$, $z=n$.}}

Then
$
Tp=2,~ Tq=p,~ Tn=n+1,
$
and hence
\begin{align*}
	d(Tp,Tq)+d(Tq,Tn)=d(2,p)+d(p,n+1)=\frac25.
\end{align*}
Moreover,
\begin{align*}
	d(p,Tp)+d(q,Tq)+d(n,Tn)
	=d(p,2)+d(q,p)+d(n,n+1)
	=\frac65+\frac{11}{10}10^{-n}.
\end{align*}
Thus,
$$
d(Tp,Tq)+d(Tq,Tn)
\leq
\frac13
\left(d(p,Tp)+d(q,Tq)+d(n,Tn)\right).
$$
\paragraph{\textbf{Subcase D.2: $x=q$, $y=p$, $z=n$.}}

We have
$
Tq=p,~ Tp=2,~ Tn=n+1.
$
Consequently,
$$
\begin{aligned}
	d(Tq,Tp)+d(Tp,Tn)=d(p,2)+d(2,n+1)\leq
	\frac{21}{100}+\frac1{10}10^{-n}.
\end{aligned}
$$
On the other hand,
$$
\begin{aligned}
	d(q,Tq)+d(p,Tp)+d(n,Tn)
	=d(q,p)+d(p,2)+d(n,n+1)
	=\frac65+\frac{11}{10}10^{-n}.
\end{aligned}
$$
Thus,
$$
d(Tq,Tp)+d(Tp,Tn)
\leq
\frac{31}{230}
\left(d(q,Tq)+d(p,Tp)+d(n,Tn)\right).
$$
\paragraph{\textbf{Subcase D.3: $x=p$, $y=n$, $z=q$.}}
Then
$
Tp=2,~ Tn=n+1,~ Tq=p.
$
Consequently,
\begin{align*}
	d(Tp,Tn)+d(Tn,Tq)
	&=d(2,n+1)+d(n+1,p)
	\leq
	\frac{21}{100}+\frac1{10}10^{-n},
\end{align*}
and 
	\begin{align*}
	d(p,Tp)+d(n,Tn)+d(q,Tq)&=\frac65+\frac{11}{10}10^{-n}.
\end{align*}
Therefore,
$$
d(Tp,Tn)+d(Tn,Tq)
\leq
\frac{31}{230}
\left(d(p,Tp)+d(n,Tn)+d(q,Tq)\right).
$$	
	The remaining configurations in Cases B--D are obtained by
	interchanging $x$ and $z$, and hence are already covered. Therefore  \eqref{Equation paired-Kannan} holds for every pairwise distinct	$x,y,z\in X$. Thus, $T$ is a Paired-Kannan contraction  with	$
	\alpha=\max\{\frac{31}{230},\frac{1}{10},\frac{2}{5},\frac{1}{5},\frac{2}{11},\frac{1}{3}\}=\frac25<\frac12.$
	Nevertheless, $T$ has no periodic point of prime
	period $2$. Hence, the hypotheses of \cite[Theorem 3]{Bouker2026} are
	satisfied, but $\text{Fix}(T)=\emptyset$. 	In particular, Example \ref{counterexample} shows that the hypothesis $\alpha < 1/2$ is insufficient for Paired-Kannan contraction in $b$-metric spaces when $s > 2$. Theorem \ref{Theorem paired-Kannan} therefore provides a corresponding fixed point result with sufficient condition. 

\end{example}

\begin{theorem}
		\label{Theorem paired-Kannan}
	Let $(X,d)$ be a complete $b$-metric space with $|X|\geq 3$, and let
	$T:X\to X$ be a Paired-Kannan contraction.  Then $T$ has a fixed point in $X$ if and only if $T$ has no periodic points of prime period 2. Moreover, the number of fixed points of $T$ is at most two. 
\end{theorem}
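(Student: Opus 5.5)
I would follow the same template as the proofs of Theorem \ref{Theorem paired-Chatterjea} and Theorem \ref{Theorem Paired-contraction}.

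\textbf{Existence.} Assume $T$ has no periodic point of prime period $2$. Fix $x_0$ and set $x_{n+1}=Tx_n$. If some $x_n$ is fixed we are done. Otherwise $x_{n-1},x_n,x_{n+1}$ are pairwise distinct for every $n$. Applying \eqref{Equation paired-Kannan} to $(x_{n-1},x_n,x_{n+1})$ with $\delta_n=d(x_n,x_{n+1})$ gives
$$\delta_n+\delta_{n+1}\le \alpha(\delta_{n-1}+\delta_n+\delta_{n+1}).$$
Hence
$$(1-\alpha)(\delta_n+\delta_{n+1})\le \alpha\delta_{n-1}+\alpha\delta_n\le \alpha(\delta_{n-1}+\delta_n).$$
So $p_n=\delta_n+\delta_{n+1}$ satisfies $p_n\le \lambda p_{n-1}$ with $\lambda=\alpha/(1-\alpha)<1$, because $\alpha<1/2$. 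In particular $\delta_n\le\lambda^n p_0\to0$. Strict decrease of $p_n$ then shows the orbit is pairwise distinct, exactly as in the earlier proofs.

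\textbf{Cauchy property.} Suppose not, and pick $\varepsilon$, $m_k\ge n_k+2$ with $d(x_{m_k},x_{n_k})\ge\varepsilon$. The triple $(x_{m_k-1},x_{n_k-1},x_{n_k})$ is pairwise distinct, so \eqref{Equation paired-Kannan} gives
$$d(x_{m_k},x_{n_k})\le\alpha(\delta_{m_k-1}+\delta_{n_k-1}+\delta_{n_k})\to0,$$
a contradiction. This is easier than in the Chatterjea case, since no $b$-triangle inequality is needed here. Completeness then gives $x_n\to x^*$.

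\textbf{Fixed point.} This is the main obstacle, and it is where $\alpha<1/s$ enters; Example \ref{counterexample} shows the step fails without it. As before, $x^*$ equals at most one orbit point, so for large $n$ the points $x_n,x^*,x_{n+1}$ are pairwise distinct. Applying \eqref{Equation paired-Kannan} to the ordered triple $(x_n,x^*,x_{n+1})$ gives
$$d(x_{n+1},Tx^*)\le d(Tx_n,Tx^*)+d(Tx^*,Tx_{n+1})\le\alpha(\delta_n+d(x^*,Tx^*)+\delta_{n+1}).$$
Combining this with $d(x^*,Tx^*)\le s\,d(x^*,x_{n+1})+s\,d(x_{n+1},Tx^*)$ yields
$$d(x^*,Tx^*)\le s\,d(x^*,x_{n+1})+s\alpha(\delta_n+\delta_{n+1})+s\alpha\, d(x^*,Tx^*).$$
Letting $n\to\infty$ gives $(1-s\alpha)\,d(x^*,Tx^*)\le0$, so $Tx^*=x^*$ because $s\alpha<1$.

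\textbf{Converse.} Suppose $x$ is fixed and $y$ has prime period $2$, with $z=Ty$. Then $x,y,z$ are pairwise distinct, as shown before. Apply \eqref{Equation paired-Kannan} to $(x,y,z)$. The left side is $d(x,z)+d(z,y)$ and the right side is $\alpha(0+d(y,z)+d(z,y))=2\alpha d(y,z)$. This forces $d(y,z)\le 2\alpha d(y,z)$, so $2\alpha\ge1$, a contradiction.

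\textbf{At most two fixed points.} If $x,y,z$ were three distinct fixed points, the right side of \eqref{Equation paired-Kannan} would vanish. Then $d(x,y)+d(y,z)=0$, contradicting distinctness.
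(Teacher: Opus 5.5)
Your proof is correct. Apart from one step it follows the paper's argument:
\begin{itemize}
\item Your fixed-point step is the paper's estimate rearranged. You substitute the Kannan bound straight into $d(x^*,Tx^*)\le s\,d(x^*,x_{n+1})+s\,d(x_{n+1},Tx^*)$. The paper first shows $(1-\alpha s)\,d(x_{n+1},Tx^*)\to 0$ and then passes to $d(x^*,Tx^*)$. Both use $s\alpha<1$ at the same place.
\item Your converse and your at-most-two-fixed-points argument are identical to the paper's.
\end{itemize}

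The genuine difference is the Cauchy step. The paper extracts $\delta_n\le\gamma\delta_{n-1}$ with $\gamma=\alpha/(1-\alpha)$ and then invokes Miculescu's lemma (Lemma \ref{lem:cauchy}). That lemma is really needed there: $\gamma$ can exceed $1/s$ (e.g.\ $s=2$, $\alpha=0.45$), and then the naive geometric-series bound fails in a $b$-metric space. The paper's route only requires the consecutive triples $(x_{n-1},x_n,x_{n+1})$ to be pairwise distinct.

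You instead first establish that the whole orbit is pairwise distinct. You then apply \eqref{Equation paired-Kannan} to the far-apart triple $(x_{m-1},x_{n-1},x_n)$, which gives $d(x_m,x_n)\le\alpha(\delta_{m-1}+\delta_{n-1}+\delta_n)$ for all $m\ge n+2$. This works because the Kannan right-hand side contains only displacement terms $d(u,Tu)$. Your argument is therefore self-contained: it needs neither the $b$-triangle inequality nor the external lemma. It is in fact a direct quantitative estimate, so the proof by contradiction is unnecessary.

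The price is that orbit distinctness must be proved before the Cauchy argument rather than after, as in the paper; you arrange this correctly via the strict decrease of $p_n$. This ordering also shows that the paper's Case 1 ($x_n=x^*$ for infinitely many $n$) is vacuous.
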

\begin{proof}
	Let $T$ be a mapping with no periodic point of prime period 2. Let $x_0\in X$ such that
$x_1=Tx_0 , ~ x_2=Tx_1,~\ldots, ~ x_n=Tx_{n-1}=T^nx_0 .$
If $x_n$ is a fixed point of $T$ for some $n\in \mathbb{N}_0$, then there is nothing to prove. Suppose that $x_n$ is not a fixed point of the mapping $T$ for every $n\in\mathbb{N}_0$, we have $x_{n}\neq Tx_{n}=x_{n+1}$. Additionally, since $T$ has no periodic points of prime period 2, then  $x_{n+1} = T(T(x_{n-1})) \neq x_{n-1}$. Hence $x_{n-1}, x_n, x_{n+1}$ are pairwise distinct. Putting $x=x_{n-1}, y=x_n, z= x_{n+1}$ in \eqref{Equation paired-Kannan}, we have
	\begin{align*}
		d(Tx_{n-1}, Tx_n) + d(Tx_n, Tx_{n+1}) &\leq \alpha (d(x_{n-1}, Tx_{n-1}) + d(x_n, Tx_n) + d(x_{n+1}, Tx_{n+1})) \\
		d(x_n, x_{n+1}) + d(x_{n+1}, x_{n+2}) &\leq \alpha (d(x_{n-1}, x_n) + d(x_n, x_{n+1}) + d(x_{n+1}, x_{n+2}))
	\end{align*}
	Thus, by simple rearrangement, we obtain
\begin{align*}
	d(x_n, x_{n+1})\leq d(x_n, x_{n+1}) + d(x_{n+1}, x_{n+2}) 
	\leq \frac{\alpha}{1 - \alpha} d(x_{n-1}, x_n)
	\leq \gamma d(x_{n-1}, x_n),
\end{align*}
	where $\displaystyle \gamma=\frac{\alpha}{1 - \alpha}. $
If $\alpha \in [0, \frac{1}{2})$, then $\gamma \in [0,1)$. Also, if $\alpha<\frac{1}{s}$ for $s>2$, then $\gamma \in [0,1)$.  In either case, $\gamma \in [0,1)$.  Thus,
\begin{equation}
	\label{**}
		d(x_n, x_{n+1})\leq \gamma d(x_{n-1}, x_n), ~\gamma  \in [0,1).
\end{equation}
By Lemma \ref{lem:cauchy}, $\{x_n\}$ is a Cauchy sequence. Since $(X,d)$ is complete, $x_n\to x^* \in X$ as $n\to\infty.$  

Now, we want to prove that $Tx^*=x^*$. Consider the set $\mathcal{D}=\{n\in \mathbb{N}:x_n=x^*\}$ in the following two cases:
\paragraph*{\textbf{Case 1}} If $\mathcal{D}$ is an infinite set, then there exists a subsequence $x_{n_k} \to x^*$ and hence $x_{n_k+1}=Tx_{n_k}=Tx^*.$ Since $x_{n_k+1}$ is also a subsequence of a convergent sequence. Thus $x_{n_k+1}\to x^*$. Hence $Tx^*=x^*.$

\paragraph*{\textbf{Case 2}} If $\mathcal{D}$ is a finite set, then there exists $N\in \mathbb{N}$ such that $x_n\neq x^*$ for all $n\geq 0$. We first show that all points of the orbit $\{x_n:n\in\mathbb{N}_0\}$ are distinct. Suppose, to the contrary, that $x_i=x_j$ for some $0\leq i<j$. If $j=i+1$, then
$x_i=Tx_i$, contradicting the assumption that no point of the
orbit is fixed. Hence $j\geq i+2$. Since $x_i=x_j$, we have
$x_{i+1}=Tx_i=Tx_j=x_{j+1}.$ Therefore,
$d(x_i,x_{i+1})=d(x_j,x_{j+1}).$
On the other hand, from the estimate \eqref{**}, 
we obtain
$$
d(x_j,x_{j+1})
\leq
\gamma^{j-i}d(x_i,x_{i+1})
<
d(x_i,x_{i+1}),
$$
which is a contradiction. Hence,
$
x_i\neq x_j~\text{for all }i\neq j.
$
Thus, the orbit $\{x_n:n\in\mathbb{N}_0\}$ consists of pairwise distinct
points. Since the orbit points are distinct, the points $x^*,x_{n}$ and  $x_{n+1}$ are pairwise distinct for every $n\geq N+1$. By the $b$-metric inequality and \eqref{Equation paired-Kannan}, we have
	\begin{align*}
		d(x_{n+1}, Tx^*) &\leq d(Tx_n,Tx^*)+d(Tx^*,Tx_{n+1})\\
		&\leq  \alpha(d(x_n,x_{n+1})+d(x^*,Tx^*)+d(x_{n+1},x_{n+2}))\\
		&\leq \alpha(d(x_n,x_{n+1})+sd(x^*,x_{n+1})+sd(x_{n+1}, Tx^*)+d(x_{n+1},x_{n+2})).
	\end{align*}
	Thus, 
	\begin{align*}
		(1-\alpha s)	d(x_{n+1}, Tx^*)\leq \alpha(d(x_n,x_{n+1})+sd(x^*,x_{n+1})+d(x_{n+1},x_{n+2})).
	\end{align*}
Since $1-\alpha s>0$ by the hypothesis, we have 
\begin{align*}
0\leq d(x_{n+1}, Tx^*)\leq\frac{\alpha}{(1-\alpha s)} (d(x_n,x_{n+1})+sd(x^*,x_{n+1})+d(x_{n+1},x_{n+2})).
\end{align*}
The right-hand side tends to zero as $n\to\infty$. Thus $$\displaystyle \lim_{n \to \infty}	d(x_{n+1}, Tx^*) = 0.$$ 
	 By the $b$-metric inequality, we have 
	$$d(x^*,Tx^*)\leq s(d(x^*,x_{n+1})+d(x_{n+1},Tx^*)).$$
Taking the limit as $n \to \infty$, we obtain $d(x^*, Tx^*) = 0$. Therefore, $x^*$ is a fixed point of $T$.

On the other hand, suppose that $T$ has a fixed point $x\in X$ and a periodic point $y\in X$ of prime period $2$, so that $T(Ty)=y$. Let $z=Ty$. Then $Tz=y$. If $x=y$ or $y=z$, then $Ty=y$, contradicting the fact that $y$ has prime period $2$. Similarly, if $x=z$, then
$Ty=z=x=Tx=Tz=y,$ which again implies $Ty=y$, a contradiction. Hence, $x,y,z$ are pairwise distinct.  In this case, applying inequality \eqref{Equation paired-Kannan}, we obtain
\begin{align*}
		d(x,z) + d(z,y)& \leq \alpha (d(y, z) + d(z, y)).
\end{align*}
Since $d(z,y)\leq d(x,z) + d(z,y)$, we have $\alpha\geq \frac{1}{2}$, contradicting the assumption in \eqref{alpha}. Hence, $T$ has no periodic point of prime period 2. 

Finally, suppose $T$ has three distinct fixed points, say $Tx = x$, $Ty = y$, and $Tz = z$. Then, \eqref{Equation paired-Kannan} yields $d(x,y)+d(y,z)= 0$, which contradicts the distinctness of $x,y$ and $z$. Hence, $T$ has at most two fixed points.
\end{proof}

\begin{remark}
	By setting $s=1$ in Theorem \ref{Theorem paired-Kannan}, we get \cite[Theorem 3.2]{Chand2024b}. Theorem \ref{Theorem paired-Kannan} establishes the corresponding result of \cite[Theorem 3]{Bouker2026} under the complete and appropriate hypotheses. Additionally, we establish the converse of \cite[Theorem 3]{Bouker2026}.
\end{remark}

The following example demonstrates the applicability of Theorem \ref{Theorem paired-Kannan}.

\begin{example}
	Let $ X=\left\{0,\frac13,\frac23,1\right\},$
	and define $d:X\times X\to[0,\infty)$ by
	$$
	d(0,\tfrac13)=5,~
	d(\tfrac13,\tfrac23)=2,~
	d(\tfrac23,1)=5,~
	d(0,\tfrac23)=2,~
	d(\tfrac13,1)=2,~
	d(0,1)=1,
	$$
	with $d(x,x)=0$ and $d(x,y)=d(y,x)$ for all $x,y\in X$.
	Then $(X,d)$ is a complete $b$-metric space with $s=\frac{5}{3}$. Indeed, the only nontrivial $b$-metric inequalities are \\
	$
	d(0,\tfrac23)=2\leq \frac{5}{3}\left(d(0,\tfrac13)+d(\tfrac13,\tfrac23)\right)=\frac{5}{3}(5+2),
	$
	and
	$d(0,\tfrac13)=5\leq \frac{5}{3}\left(d(0,1)+d(1,\tfrac13)\right)=\frac{5}{3}(1+2).
	$
	Define $T:X\to X$ by
	$$
	Tx=
	\begin{cases}
		0, & x\in\left\{0,\frac13\right\},\\[1mm]
		1, & x\in\left\{\frac23,1\right\}.
	\end{cases}
	$$
	Clearly, $\operatorname{Fix}(T)=\left\{0,1\right\}.$
	Moreover, $T$ has no periodic point of prime period $2$, since
	$	T(T\left(\frac13\right))=T(0)=0\neq \frac{1}{3}$ and $
	T(T\left(\frac23\right))=T(1)=1\neq \frac{2}{3}.$
	
	We now verify the inequality \eqref{Equation paired-Kannan}
	for all pairwise distinct $x,y,z\in X$. Since $X$ contains four points, there are 24 ordered triples of pairwise distinct points. 
	However, \eqref{Equation paired-Kannan} is invariant under the interchange of $x$ and $z$, so it suffices to consider the corresponding $12$ triples. Direct computation shows that the inequality holds for $\alpha\geq\frac{1}{10},\frac{1}{7},\frac{2}{7}, \frac{2}{5}$, depending on the triple. Thus, the largest required value is $\alpha=\frac25<\frac35=\frac1s$. Consequently, the inequality \eqref{Equation paired-Kannan} holds for every $ \alpha\in\left[\frac25,\frac{3}{5}\right).$
Therefore, all the hypotheses of Theorem \ref{Theorem paired-Kannan} are satisfied. 
\end{example}

\section{Conclusion}

In this paper, we investigated paired contractive mappings in $b$-metric spaces and established fixed point results for Paired, Paired–Kannan, and Paired–Chatterjea contractions. In particular, we introduced the Paired–Chatterjea contraction framework and established its corresponding fixed point results in $b$-metric spaces.

Our results for paired contraction mappings in b-metric spaces extend the admissible range of the contraction constant from $\left[0,\frac{1}{s}\right)$, as considered in \cite[Theorem 2]{Bouker2026}, to [0,1).  We also examined the coefficient restriction in the existing Paired–Kannan fixed point result of \cite[Theorem 3]{Bouker2026}. The example presented in this paper shows that the stated condition does not guarantee the claimed fixed point conclusion in $b$-metric spaces. This motivates the revised coefficient condition
$\alpha<\min\left\{\frac12,\frac1s\right\}.$ Under this condition, we establish the corresponding fixed point result.

The examples provided throughout the paper illustrate the distinctions among the paired contractive classes and demonstrate the relevance of the stated coefficient restrictions. The results contribute to the study of paired contractive mappings in generalized metric spaces and provide several directions for further investigation.

\end{document}